\newtheorem{theorem}{Theorem}[section]
\newtheorem{lemma}[theorem]{Lemma}
\newtheorem{proposition}[theorem]{Proposition}
\theoremstyle{definition}
\newtheorem{definition}[theorem]{Definition}
\newcommand{\Ad}{\mathrm{Ad}}
\DeclareMathOperator{\Hol}{\mathrm{Hol}}
\DeclareMathOperator{\Aut}{\mathrm{Aut}}
\DeclareMathOperator{\Iso}{\mathrm{I}}
\DeclareMathOperator{\Orth}{\mathrm{O}}
\DeclareMathOperator{\GLin}{\mathrm{GL}}
\DeclareMathOperator{\SLin}{\mathrm{SL}}
\DeclareMathOperator{\PGL}{\mathrm{PGL}}
\DeclareMathOperator{\PO}{\mathrm{PO}}
\newcommand{\MC}[1]{\omega_{{}_{#1}}}
\newcommand{\Lt}[1]{\operatorname{L}_{#1}}
\newcommand{\Rt}[1]{\operatorname{R}_{#1}}
\DeclareSymbolFont{tipa}{T3}{cmr}{m}{sl}
\DeclareMathSymbol{\kgf}{\mathord}{tipa}{'255}
\title[Sprawl prelude]{A method for determining \\ Cartan geometries from the local behavior of automorphisms}
\author{Jacob W. Erickson}
\thanks{Partially supported by the Brin Graduate Fellowship at the University of Maryland}
\date{\today}
\begin{document}
\begin{abstract}We introduce a construction for a Cartan geometry that captures the local behavior of a given geometric automorphism near a distinguished element. The result of this construction, which\linebreak we call the sprawl generated by the automorphism, is uniquely characterized by a kind of universal property that allows us to compare different Cartan geometries that admit automorphisms with equivalent local behavior near a distinguished element. As example applications, we describe how to construct non-flat real projective structures admitting nontrivial automorphisms with higher-order fixed points and extend some known local automorphisms with higher-order fixed points on non-flat parabolic geometries to global automorphisms.\end{abstract}

\maketitle


\section{Introduction}
The behavior of symmetries, when they exist, can often tell us a great deal about a particular geometric structure. For example, isometries of a Riemannian manifold necessarily act properly, so if the automorphism group of a conformal Riemannian structure were to act \textit{non}-properly, then it could not possibly preserve an underlying Riemannian metric in the conformal class. Moreover, a celebrated theorem of Ferrand and Obata tells us that, in each dimension greater than two, there are only two conformal structures of definite signature for which the automorphism group acts non-properly: the conformal sphere and the conformal structure overlying Euclidean space.

A natural setting for investigating the general behavior of symmetries\linebreak for various types of geometric structures is the unifying framework of Cartan geometries. Working with Cartan geometries, which extend the spirit of Klein's Erlangen program to a plethora of different types of geometric structures that are \textit{modelled} on particular homogeneous geometries, frequently leads to comprehensive, overarching results that apply in far greater generality than to just a single type of geometric structure. Indeed, in \cite{FrancesRankOneFO}, Frances recognized that the Ferrand-Obata theorem was a particular instance of a more general result applying to all parabolic Cartan geometries of real rank 1, under certain mild curvature restrictions. Even when we just restrict to specific types of geometric structures, though, the overarching framework given by the Cartan machinery is still quite well-suited to exploring the behavior of automorphisms. The recent work done in \cite{MelnickPecastaing2022} and \cite{FrancesMelnick2023} toward resolving\linebreak the Lorentzian Lichnerowicz conjecture, for example, was the result of careful consideration of the Cartan geometries canonically associated to conformal Lorentzian structures.

Given the utility and wide applicability of Cartan geometries and their automorphisms for studying symmetries of geometric structures, it would be useful to know the extent of what a given automorphism can tell us about a Cartan geometry. Toward this goal, this paper presents a method for determining precisely what we can learn about Cartan geometries from the \textit{local} geometric behavior of automorphisms.

Specifically, we construct a kind of ``universal example''---which we call a \emph{sprawl}---for a Cartan geometry admitting an automorphism with given local geometric behavior. Our main result, Theorem \ref{universalproperty}, then provides a kind of universal property for these objects, showing that there is a natural geometric map into each Cartan geometry admitting an automorphism with the generating local behavior coming from the sprawl. In other words, the sprawl construction completely encodes the information given by the local behavior of an automorphism on a particular open set of the geometry, putting it into a useful form.

\section*{Acknowledgements}
We would specifically like to thank Karin Melnick for several helpful suggestions and conversations, and Charles Frances, both for inspiring this paper and for tolerating the author's abundant enthusiasm for the topic. Additionally, we would like to thank the anonymous referees who reviewed earlier versions of this paper and managed to provide genuinely useful improvements.

\section{Preliminaries}\label{preliminaries}
The standard references for the fundamentals of Cartan geometries are \cite{Sharpe1997} and \cite{CapSlovakPG1}. In this section, we are merely specifying notation and terminology; we do not intend this as a first introduction to the topic.

\begin{definition}For a Lie group $G$ and closed subgroup $H$ such that $G/H$ is connected, we call the pair $(G,H)$ a \emph{model} or \emph{model geometry}. The Lie group $G$ is called the \emph{model group} and $H$ is called the \emph{isotropy} or \emph{stabilizer subgroup}.\end{definition}

For example, writing $\mathrm{Aff}(m):=\mathbb{R}^m\rtimes\GLin_m\mathbb{R}$ for the Lie group of affine transformations on $\mathbb{R}^m$ and thinking of $\GLin_m\mathbb{R}$ as the closed subgroup of $\mathrm{Aff}(m)$ fixing the origin $0\in\mathbb{R}^m$, the pair $(\mathrm{Aff}(m),\GLin_m\mathbb{R})$ is a model corresponding to affine geometry on $\mathrm{Aff}(m)/\GLin_m\mathbb{R}\cong\mathbb{R}^m$.

As the name should hopefully suggest, these \emph{model geometries} act as models for the various types of \emph{Cartan geometries}.

\begin{definition}For a model geometry $(G,H)$, a \emph{Cartan geometry of type $(G,H)$} over a smooth manifold $M$ is a pair $(\mathscr{G},\omega)$, where $\mathscr{G}$ is a principal $H$-bundle over $M$ and $\omega$ is a $\mathfrak{g}$-valued 1-form on $\mathscr{G}$ satisfying the following three criteria:
\begin{itemize}
\item For each $\mathscr{g}\in\mathscr{G}$, $\omega_\mathscr{g}:T_\mathscr{g}\mathscr{G}\to\mathfrak{g}$ is a linear isomorphism.
\item For each $h\in H$, $\Rt{h}^*\omega=\Ad_{h^{-1}}\omega$, where $\Rt{h}:\mathscr{g}\mapsto\mathscr{g}h$ denotes right-translation by $h$.
\item For each $Y\in\mathfrak{h}$, the flow of the vector field $\omega^{-1}(Y)$ is given by $\exp(t\omega^{-1}(Y))=\Rt{\exp(tY)}$ for all $t\in\mathbb{R}$.
\end{itemize}\end{definition}

To make the notation cleaner, we will always denote the quotient map of a principal $H$-bundle by $q_{{}_H}$, even when there are multiple principal $H$-bundles involved; the meaning should always be clear from context.

The geometric structure of a model geometry $(G,H)$, when encoded as a Cartan geometry, is called the \emph{Klein geometry of type $(G,H)$}.

\begin{definition}For a model $(G,H)$, the \emph{Klein geometry of type $(G,H)$} is the Cartan geometry of type $(G,H)$ over $G/H$ given by the pair $(G,\MC{G})$, where $G$ is the model group and $\MC{G}$ is the Maurer--Cartan form on $G$ given by $\MC{G}(X_g):=\Lt{g^{-1}*}X_g\in T_eG=\mathfrak{g}$ for $X_g\in T_gG$.\end{definition}

To compare different Cartan geometries of the same type, we will use \emph{geometric maps}.

\begin{definition}For two Cartan geometries $(\mathscr{G}_1,\omega_1)$ and $(\mathscr{G}_2,\omega_2)$ of type $(G,H)$, a \emph{geometric map} $\varphi:(\mathscr{G}_1,\omega_1)\to(\mathscr{G}_2,\omega_2)$ is an $H$-equivariant smooth map $\varphi:\mathscr{G}_1\to\mathscr{G}_2$ such that $\varphi^*\omega_2=\omega_1$.\end{definition}

For notational convenience, whenever a particular map or relation on the overlying principal $H$-bundles canonically induces a corresponding map or relation on the underlying base manifolds, we will always denote this induced map or relation by the same symbol as the bundle map or relation. A particularly common use of this will be for geometric maps, since they induce corresponding local diffeomorphisms between the base manifolds by $H$-equivariance; in short, for each geometric map $\varphi:(\mathscr{G}_1,\omega_1)\to(\mathscr{G}_2,\omega_2)$, we will write $\varphi(q_{{}_H}(\mathscr{g})):=q_{{}_H}(\varphi(\mathscr{g}))$ for the induced map on the base manifolds.

\textit{Injective} geometric maps will be particularly important to us. We will call a geometric map $\varphi:(\mathscr{G}_1,\omega_1)\to(\mathscr{G}_2,\omega_2)$ a \emph{geometric embedding} when $\varphi$ is injective and a \emph{(geometric) isomorphism} when $\varphi$ is bijective. Moreover, a geometric isomorphism from a Cartan geometry to itself is called a \emph{(geometric) automorphism}.

Automorphisms of Cartan geometries tend to be fairly rigid. Given an automorphism $\alpha$ of $(\mathscr{G},\omega)$ and an element $\mathscr{e}\in\mathscr{G}$, the image $\alpha(\mathscr{e})$ uniquely determines $\alpha$ when the base manifold is connected. The group $\Aut(\mathscr{G},\omega)$ of all automorphisms of $(\mathscr{G},\omega)$ therefore acts freely on $\mathscr{G}$, and we can induce a Lie group structure on it by looking at the smooth structure inherited from orbits of $\Aut(\mathscr{G},\omega)$ in $\mathscr{G}$.

Another tool for comparing different Cartan geometries is \emph{curvature}, which helps to locally distinguish Klein geometries from other Cartan geometries of the same type.

\begin{definition}The \emph{curvature} of a Cartan geometry $(\mathscr{G},\omega)$ is the $\mathfrak{g}$-valued 2-form $\Omega:=\mathrm{d}\omega+\tfrac{1}{2}[\omega,\omega]$.\end{definition}

The curvature of a Cartan geometry vanishes in a neighborhood of a point if and only if it is geometrically equivalent to the Klein geometry in a neighborhood of that point. In other words, $\Omega$ vanishes on some neighborhood of $\mathscr{e}\in\mathscr{G}$ if and only if there exists a geometric embedding \[\varphi:(q_{{}_H}^{-1}(U),\MC{G})\hookrightarrow(\mathscr{G},\omega)\] from an $H$-invariant neighborhood $q_{{}_H}^{-1}(U)$ of $e\in G$ to $\mathscr{G}$ such that $\varphi(e)=\mathscr{e}$.

Finally, our primary tool for working with Cartan geometries in this paper is that of \emph{development}.

\begin{definition}Given a (piecewise smooth)\footnote{Throughout this paper, a ``path'' will always mean a piecewise smooth path.} path $\gamma:[0,1]\to\mathscr{G}$ in a Cartan geometry $(\mathscr{G},\omega)$ of type $(G,H)$, its \emph{development} is the unique path $\gamma_G:[0,1]\to G$ in $G$ such that $\gamma_G(0)=e$ and $\omega(\dot{\gamma})=\omega_G(\dot{\gamma}_G)$, where $\omega_G$ is the Maurer--Cartan form on $G$.\end{definition}

The idea here is that the tangent vectors $\dot{\gamma}$ tell us how to move along $\gamma$ at each point in time, and $\gamma_G$ is the path we get by trying to follow these same instructions in the model group $G$, starting at the identity. Crucially, it follows that if we have two paths with the same development and starting point in a Cartan geometry, then they must be the same path.

\section{Sprawls}\label{sprawlsection}
We would like to construct Cartan geometries that are generated ``as freely as possible'' by the local behavior of an automorphism. We call such geometries \textit{sprawls}, a term chosen both to evoke the idea of something extending as lazily as possible, and to sound like the word \textit{span}, which plays a vaguely similar role for vector spaces.


To explain the ideas involved effectively, we start by giving the set-up of the construction and describing a na\"{i}ve approach to achieving what we want. While this na\"{i}ve approach ultimately does not work, it serves to motivate the considerably more complicated definition of the sprawl, which does exactly what we want it to do. After giving the appropriate definitions and verifying that they make sense, we will finally state and prove the key result of the paper, Theorem \ref{universalproperty}, which gives a kind of universal property for sprawls that will allow us to compare Cartan geometries admitting automorphisms with similar local behavior.

\subsection{The set-up and a na\"{i}ve approach}\label{sprawl1}
Throughout this section, let $(\mathscr{G},\omega)$ be a Cartan geometry of type $(G,H)$ over a connected smooth manifold $M$ with a distinguished element $\mathscr{e}\in\mathscr{G}$ and an automorphism $\alpha\in\Aut(\mathscr{G},\omega)$. Furthermore, we fix a connected open subset $U$ of $M$ containing both $q_{{}_H}(\mathscr{e})$ and $q_{{}_H}(\alpha(\mathscr{e}))$; this allows $U$ to capture the local behavior of $\alpha$ near $\mathscr{e}$, in the sense that sufficiently small open neighborhoods of $q_{{}_H}(\mathscr{e})$ will be mapped back into $U$ by $\alpha$.

Because $\alpha$ is an automorphism, all of the iterates of $q_{{}_H}^{-1}(U)$ under $\alpha$ are geometrically equivalent, but inside $\mathscr{G}$, they might glue together in ways that are unnecessary to still admit an automorphism that behaves like $\alpha$ near $\mathscr{e}$. As a simple example, consider the case where $(\mathscr{G},\omega)$ is the Riemannian geometry over a Euclidean torus, $\alpha$ is a translation, and $U$ is a small neighborhood of some point $q_{{}_H}(\mathscr{e})$: while successive iterates of $\alpha$ will push $U$ back around to itself, as in Figure \ref{torusexample}, lifting to the Euclidean plane demonstrates a situation with an automorphism exhibiting the same local behavior as $\alpha$, but which does not push (the geometrically identical copy of) $U$ back onto itself.

\begin{figure}
\centering\includegraphics[width=0.6\textwidth]{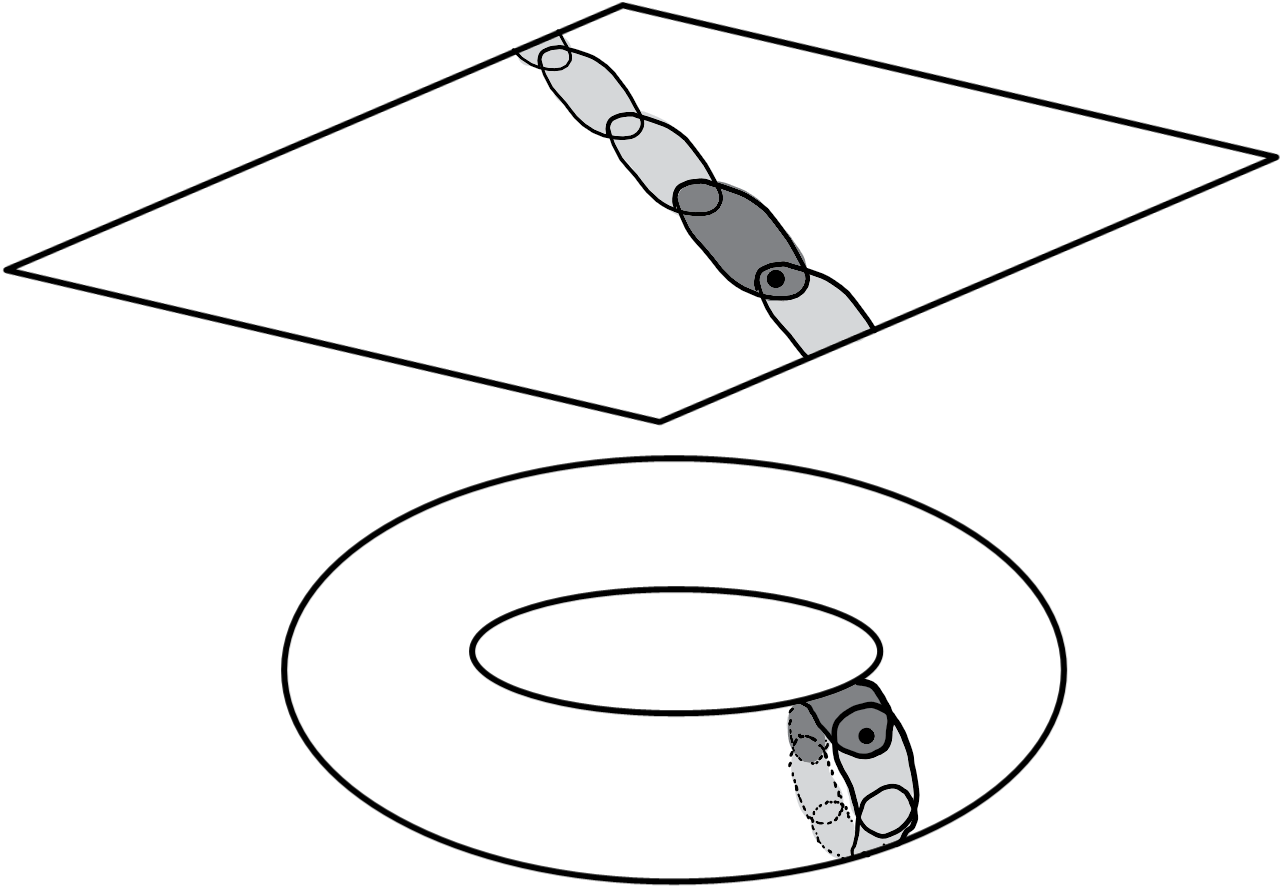}
\caption{The region $U$ (highlighted in darker gray) is pushed back to itself in the torus by iterates of the translation $\alpha$, but lifting the situation to the plane gives a situation with identical local behavior such that $U$ never returns to itself after leaving}
\label{torusexample}
\end{figure}

Our goal is, in essence, to construct a geometry that is generated ``as freely as possible'' by the local behavior of $\alpha$. In other words, we would like to construct a geometry by taking iterates of $U$ under $\alpha$ and gluing them together as little as possible to still retain an automorphism with the same local behavior as $\alpha$ near the distinguished point $\mathscr{e}$.

To specify these iterates in a way that avoids implicitly gluing them inside $\mathscr{G}$, we define, for each $i\in\mathbb{Z}$, a relabeling map \[\tilde{\alpha}^i:q_{{}_H}^{-1}(U)\to\tilde{\alpha}^i(q_{{}_H}^{-1}(U)),\] where $\tilde{\alpha}^i(q_{{}_H}^{-1}(U))$ is a diffeomorphic copy of $q_{{}_H}^{-1}(U)$ with all of its points $\mathscr{g}$ rewritten as $\tilde{\alpha}^i(\mathscr{g})$. There is a natural right $H$-action on $\tilde{\alpha}^i(q_{{}_H}^{-1}(U))$ given by, for each $h\in H$, $\tilde{\alpha}^i(\mathscr{g})h:=\tilde{\alpha}^i(\mathscr{g}h)$, which makes $\tilde{\alpha}^i$ an $H$-equivariant map and, therefore, an isomorphism of principal $H$-bundles.

With this notation, we can specify what we are doing a bit more concretely. We will take the disjoint union $\bigsqcup_{i\in\mathbb{Z}}\tilde{\alpha}^i(q_{{}_H}^{-1}(U))$ and apply some minimal gluing (via an equivalence relation $\sim$) to obtain a new Cartan geometry for which $\tilde{\alpha}:\tilde{\alpha}^i(\mathscr{g})\mapsto\tilde{\alpha}^{i+1}(\mathscr{g})$ is an automorphism with the same local behavior as $\alpha$ near $\mathscr{e}$. Identifying $\tilde{\alpha}^0(q_{{}_H}^{-1}(U))$ with $q_{{}_H}^{-1}(U)$, so that we may think of $\tilde{\alpha}^0(\mathscr{e})\in\tilde{\alpha}^0(q_{{}_H}^{-1}(U))$ as $\mathscr{e}\in q_{{}_H}^{-1}(U)$, this amounts to requiring $\tilde{\alpha}(\mathscr{e})=\alpha(\mathscr{e})$, since automorphisms of Cartan geometries over a connected base manifold are uniquely determined by their image on a single element.

If $\tilde{\alpha}^{i+1}(\mathscr{e})=\tilde{\alpha}^i(\tilde{\alpha}(\mathscr{e}))=\tilde{\alpha}^i(\alpha(\mathscr{e}))$, then for every (piecewise smooth) path $\gamma:[0,1]\to q_{{}_H}^{-1}(U\cap\alpha(U))$ starting with $\alpha(\mathscr{e})$, we must also have $\tilde{\alpha}^{i+1}(\alpha^{-1}(\gamma(t)))=\tilde{\alpha}^i(\gamma(t))$ for all $t\in[0,1]$, since $\tilde{\alpha}^{i+1}(\alpha^{-1}(\gamma))$ and $\tilde{\alpha}^i(\gamma)$ are paths with the same development and starting point. In other words, whatever this new Cartan geometry ends up being, adjacent iterates $\tilde{\alpha}^i(q_{{}_H}^{-1}(U))$ and $\tilde{\alpha}^{i+1}(q_{{}_H}^{-1}(U))$ must be glued together by identifying $\tilde{\alpha}^i(\mathscr{g})$ with $\tilde{\alpha}^{i+1}(\alpha^{-1}(\mathscr{g}))$ whenever $q_{{}_H}(\mathscr{g})$ lies within the same connected component of $U\cap\alpha(U)$ as $\alpha(q_{{}_H}(\mathscr{e}))$. With this in mind, it is tempting to imagine that the minimal equivalence relation on $\bigsqcup_{i\in\mathbb{Z}}\tilde{\alpha}^i(q_{{}_H}^{-1}(U))$ that accomplishes these necessary identifications between adjacent iterates is sufficient as well. Indeed, we can see that this gluing gives precisely the right answer in the torus example above. We will call this equivalence relation the \emph{na\"{i}ve gluing}.

\begin{definition}The \emph{na\"{i}ve gluing} $\sim_\text{naive}$ is the minimal equivalence relation on $\bigsqcup_{i\in\mathbb{Z}}\tilde{\alpha}^i(q_{{}_H}^{-1}(U))$ such that, for each $i\in\mathbb{Z}$, $\tilde{\alpha}^i(\mathscr{g})\sim_\text{naive}\tilde{\alpha}^{i+1}(\alpha^{-1}(\mathscr{g}))$ whenever $q_{{}_H}(\mathscr{g})\in U\cap\alpha(U)$ is contained in the same connected component of $U\cap\alpha(U)$ as $\alpha(q_{{}_H}(\mathscr{e}))$.

Consistent with our notational convention for canonically induced maps and relations on base manifolds from Section \ref{preliminaries}, we will use the same symbol $\sim_\text{naive}$ to denote the induced equivalence relation on $\bigsqcup_{i\in\mathbb{Z}}\tilde{\alpha}^i(U)$, given by $\tilde{\alpha}^{i_1}(q_{{}_H}(\mathscr{g}_1))\sim_\text{naive}\tilde{\alpha}^{i_2}(q_{{}_H}(\mathscr{g}_2))$ if and only if $\tilde{\alpha}^{i_1}(\mathscr{g}_1)\sim_\text{naive}\tilde{\alpha}^{i_2}(\mathscr{g}_2h)$ for some $h\in H$. We will also refer to this as the na\"ive gluing.\end{definition}

Unfortunately, this na\"{i}ve gluing will not work in general. To see this, consider the Klein geometry $(\Iso(2),\MC{\Iso(2)})$ of type $(\Iso(2),\Orth(2))$ over $\mathbb{R}^2$, corresponding to the Euclidean plane. Within this geometry, we choose a rotation $\alpha$ with infinite order that fixes $0$ and an open set $U$ given by the union of a small open ball centered on $0$ and an open sector of the plane that is disjoint from its image under $\alpha$, as depicted in Figure \ref{rotexample}. The identity element $(0,\mathds{1})$, which we take to be our distinguished element, is contained in $q_{{}_{\Orth(2)}}^{-1}(U)$, as is $\alpha(0,\mathds{1})$, since $\alpha$ fixes $0$. Under the na\"{i}ve gluing above, the iterates $\tilde{\alpha}^i(q_{{}_{\Orth(2)}}^{-1}(U))$ all coincide over the small open ball around $0$, but nowhere else. This becomes a problem whenever $U\cap\alpha^i(U)$ has points that lie outside of that small open ball: if $x\in U\cap\alpha^i(U)$ lies on the boundary of the open ball, then every neighborhood of $\tilde{\alpha}^i(\alpha^{-i}(x))$ must intersect every neighborhood of $\tilde{\alpha}^0(x)\cong x$ inside the open ball, so since $\tilde{\alpha}^i(\alpha^{-i}(x))$ is not identified with $x$ under the na\"{i}ve gluing, the resulting space is not even Hausdorff.

\begin{figure}
\centering\includegraphics[width=0.6\textwidth]{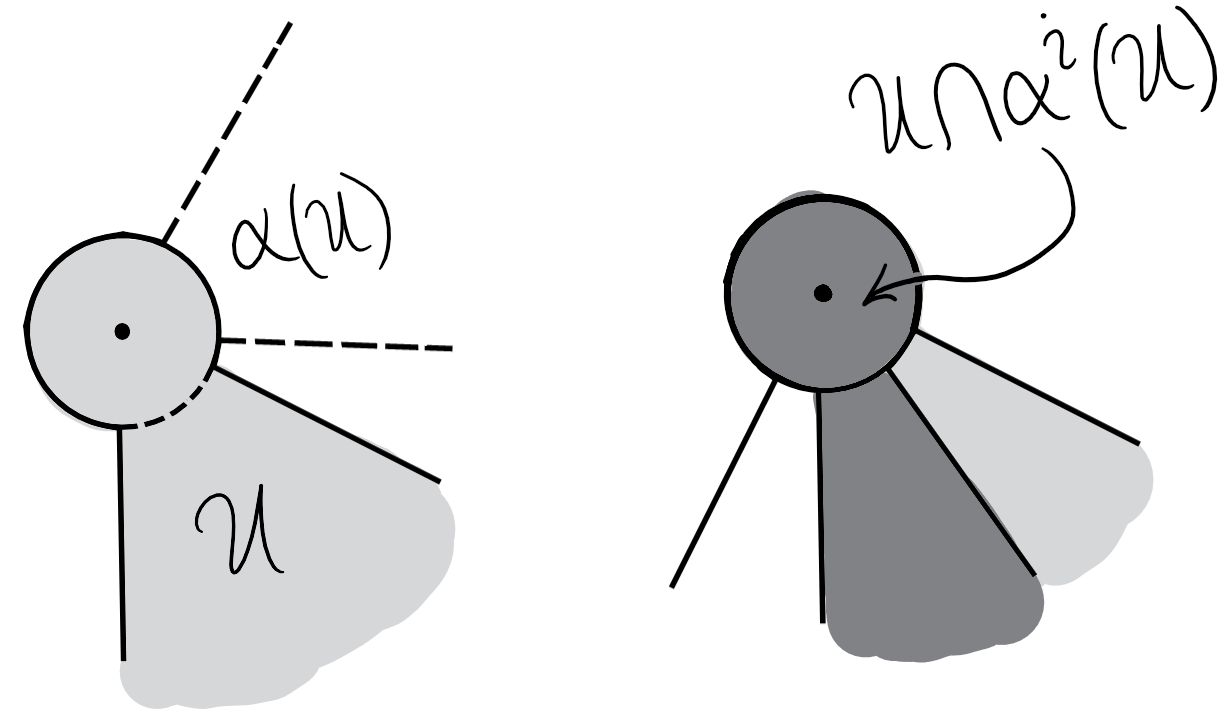}
\caption{The region $U$ (highlighted in lighter gray) given by the union of an open ball and an open sector that is disjoint from its image under the rotation $\alpha$, as well as a depiction of its intersection (highlighted in darker gray) with an iterate under $\alpha$ where the overlap escapes the open ball}
\label{rotexample}
\end{figure}

We can, fortunately, salvage this idea with some slightly intricate modifications. Consider a path $\gamma:[0,1]\to U\cap\alpha^i(U)$ that starts outside of the open ball and ends inside of it. Then, we get corresponding paths $\tilde{\alpha}^0(\gamma)\cong\gamma$ and $\tilde{\alpha}^i(\alpha^{-i}(\gamma))$ in $\tilde{\alpha}^0(U)$ and $\tilde{\alpha}^i(U)$, respectively, and we can lift these to paths $\hat{\gamma}_0$ in $\tilde{\alpha}^0(q_{{}_{\Orth(2)}}^{-1}(U))$ and $\hat{\gamma}_1$ in $\tilde{\alpha}^i(q_{{}_{\Orth(2)}}^{-1}(U))$ with the same development and endpoint. In particular, $\hat{\gamma}_0$ and $\hat{\gamma}_1$ must coincide inside the new Cartan geometry, if it exists, so that the concatenation $\hat{\gamma}_0\star\overline{\hat{\gamma}_1}$ of $\hat{\gamma}_0$ with the reverse of $\hat{\gamma}_1$ is a loop that ``backtracks'' over itself.

The new strategy, therefore, is to identify elements $\tilde{\alpha}^{i_1}(\mathscr{g}_1)$ and $\tilde{\alpha}^{i_2}(\mathscr{g}_2)$ whenever we can find a path starting at $\alpha^{i_1}(\mathscr{g}_1)$ that only crosses between iterates at points identified under the na\"{i}ve gluing and which ``backtracks'' over itself to end up at $\alpha^{i_2}(\mathscr{g}_2)$. In the next subsection, we will formalize this correction to the na\"{i}ve gluing, which we will use to define the sprawl.

\subsection{The definition of the sprawl}\label{sprawl2}
To start, we provide a way of describing paths that only cross between iterates at the points identified under the na\"{i}ve gluing.

\begin{definition}A \emph{$(U,\alpha,\mathscr{e})$-incrementation}\footnote{We will consistently drop reference to $U$, $\alpha$, and $\mathscr{e}$ when they are to be understood from context. For example, we will typically just refer to an \emph{incrementation}, rather than a $(U,\alpha,\mathscr{e})$-incrementation.} for $\gamma:[0,1]\to M$ is a finite partition $0=t_0<\cdots<t_\ell=1$ of $[0,1]$ together with a finite sequence of integers $k_0,\dots,k_{\ell-1}\in\mathbb{Z}$ such that, for each $0\leq j<\ell$, $|k_j-k_{j+1}|=1$ and $\gamma([t_j,t_{j+1}])\subseteq\alpha^{k_j}(U)$, and for each $0\leq j<\ell-1$, $\gamma(t_{j+1})$ is in the connected component of $\alpha^{k_j}(U)\cap\alpha^{k_{j+1}}(U)$ containing $q_{{}_H}(\alpha^{\max(k_j,k_{j+1})}(\mathscr{e}))$. The integers $k_0$ and $k_{\ell-1}$ are called the \emph{initial label} and \emph{terminal label}, respectively.\end{definition}

\begin{definition}We say a path $\gamma:[0,1]\to M$ is \emph{$(U,\alpha,\mathscr{e})$-incremented from $i_1$ to $i_2$} if and only if there is a $(U,\alpha,\mathscr{e})$-incrementation for $\gamma$ with initial label $i_1$ and terminal label $i_2$.\end{definition}

The basic idea for an incrementation of a path $\gamma$ is to break it into segments $\gamma([t_j,t_{j+1}])$, and then label each such segment with a specific integer $k_j$ such that $\gamma([t_j,t_{j+1}])\subseteq\alpha^{k_j}(U)$. This labeling is further required to only move up or down by $1$ between adjacent segments, with the intersections occurring only in places which must be identified under the na\"{i}ve gluing. In other words, an incrementation amounts to describing a path within the quotient space $\bigsqcup_{i\in\mathbb{Z}}\tilde{\alpha}^i(U)/\sim_\text{naive}$ of the na\"ive gluing. We have attempted to illustrate the concept in Figures \ref{increment1pic} and \ref{increment2pic}.

\begin{figure}
\centering\includegraphics[width=0.6\textwidth]{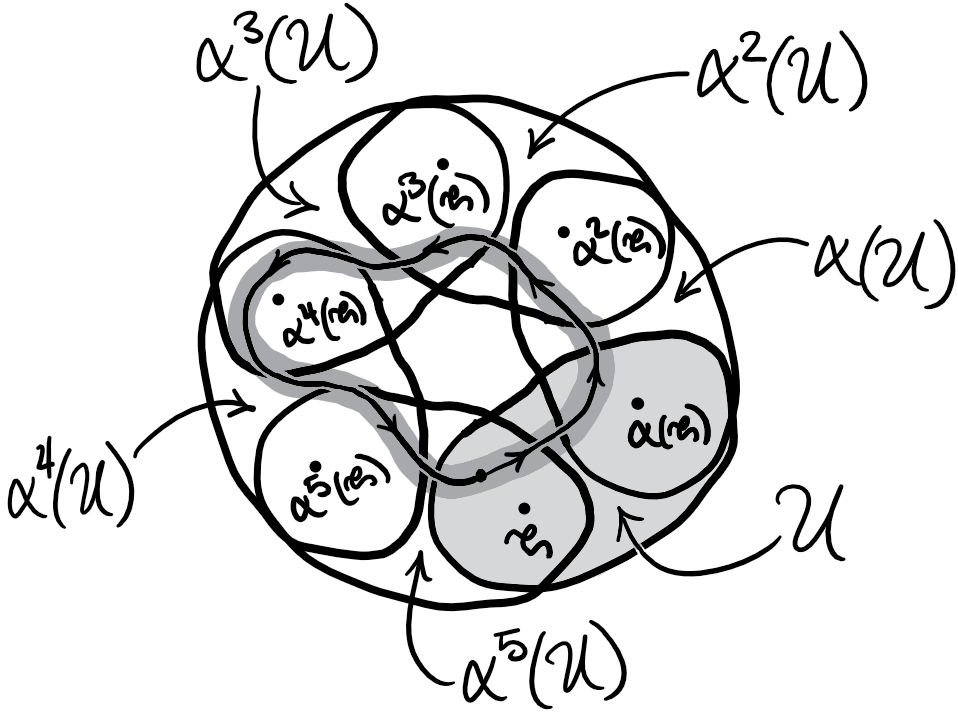}
\caption{A path $\gamma$, highlighted in darker gray, in the manifold $M$, as well the region $U$, highlighted in lighter gray, and its iterates under an automorphism $\alpha$}
\label{increment1pic}
\end{figure}

\begin{figure}
\centering\includegraphics[width=0.5\textwidth]{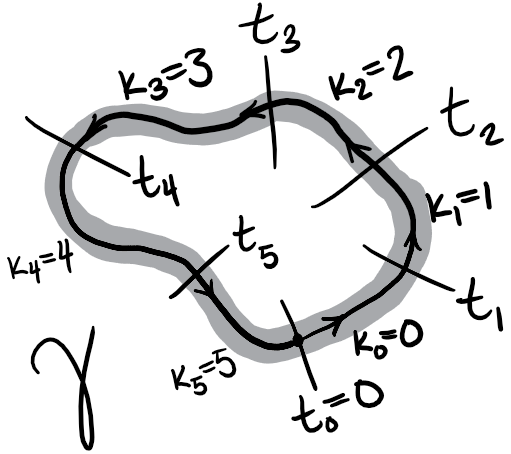}
\caption{An incrementation for the path $\gamma$ depicted in Figure \ref{increment1pic}}
\label{increment2pic}
\end{figure}

Recall that a null-homotopy based at a point $q_{{}_H}(\mathscr{g})\in M$ is a map $c:[0,1]^2\to M$, given as $(s,t)\mapsto c_s(t)$, such that \[c_s(0)=c_s(1)=c_1(s)=q_{{}_H}(\mathscr{g})\] for all $s\in[0,1]$. We will need to use a specific type of homotopy, called a \emph{thin} homotopy.

\begin{definition}A null-homotopy $c:[0,1]^2\to M$ is said to be \emph{thin} if and only if $c([0,1]^2)=c_0([0,1])$. Consequently, a loop $\gamma:[0,1]\to M$ is \emph{thinly null-homotopic} if and only if there exists a thin null-homotopy $c$ based at $\gamma(0)=\gamma(1)$ such that $c_0=\gamma$.\end{definition}

A thin null-homotopy from a loop $\gamma:[0,1]\to M$ to the constant loop at $\gamma(0)=\gamma(1)$ deforms $\gamma$ to a point while staying within its own image. The archetypical example of a thinly null-homotopic loop is the concatenation of a path with its reverse, so that the resulting loop ``backtracks'' over itself. Thin homotopies are geometrically useful in many contexts because thinly homotopic loops always have the same holonomy (see, for example, \cite{CaetanoPicken1994}). In particular, while we do not make explicit use of this outside of the appendix in the current version of the paper, it is worth noting that thinly null-homotopic loops always have trivial holonomy.

With incrementations and thin null-homotopies in hand, we can now define sprawl-equivalence.

\begin{definition}Two elements $\tilde{\alpha}^{i_1}(\mathscr{g}_1)$ and $\tilde{\alpha}^{i_2}(\mathscr{g}_2)$ of $\bigsqcup_{i\in\mathbb{Z}}\tilde{\alpha}^i(q_{{}_H}^{-1}(U))$ are said to be \emph{sprawl-equivalent}, denoted by $\tilde{\alpha}^{i_1}(\mathscr{g}_1)\sim\tilde{\alpha}^{i_2}(\mathscr{g}_2)$, if and only if $\alpha^{i_1}(\mathscr{g}_1)=\alpha^{i_2}(\mathscr{g}_2)$ and there exists a thinly null-homotopic loop $\gamma:[0,1]\to M$ that is based at \[\gamma(0)=q_{{}_H}(\alpha^{i_1}(\mathscr{g}_1))=q_{{}_H}(\alpha^{i_2}(\mathscr{g}_2))=\gamma(1)\] and incremented from $i_1$ to $i_2$.\end{definition}

\begin{proposition}Sprawl-equivalence is an equivalence relation.\end{proposition}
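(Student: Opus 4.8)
The plan is to establish the claim first for base-sense sprawl-equivalence and then deduce the bundle-sense case for free. Writing $\pi\colon\tilde{\alpha}^i(\mathscr{g})\mapsto\tilde{\alpha}^i(q_{{}_H}(\mathscr{g}))$ and $\rho\colon\tilde{\alpha}^i(\mathscr{g})\mapsto\alpha^i(\mathscr{g})$, the bundle-sense relation on $\bigsqcup_{i\in\mathbb{Z}}\tilde{\alpha}^i(q_{{}_H}^{-1}(U))$ is by definition the conjunction of the pullback along $\rho$ of equality in $\mathscr{G}$ with the pullback along $\pi$ of the base-sense relation. Since pullbacks of equivalence relations are equivalence relations and the intersection of two equivalence relations on the same set is again one, it suffices to treat the base-sense relation $\sim$ on $\bigsqcup_{i\in\mathbb{Z}}\tilde{\alpha}^i(U)$. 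Reflexivity there is quick: because $q_{{}_H}(\mathscr{g})\in U$ gives $q_{{}_H}(\alpha^i(\mathscr{g}))\in\alpha^i(U)$, the constant loop at $q_{{}_H}(\alpha^i(\mathscr{g}))$ carries the one-segment incrementation (trivial partition, single label $k_0=i$) with initial and terminal label $i$, and it is trivially thin since its image and that of the constant homotopy are both the single point.

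For symmetry, suppose $\gamma$ is a thinly null-homotopic loop at the common base point $p$, incremented from $i_1$ to $i_2$ via a partition $\{t_j\}_{j=0}^{\ell}$ and labels $\{k_j\}_{j=0}^{\ell-1}$. I would pass to the reverse loop $\bar{\gamma}\colon t\mapsto\gamma(1-t)$: it is again based at $p$ and thinly null-homotopic, since if $c$ is a thin null-homotopy of $\gamma$ then $(s,t)\mapsto c_s(1-t)$ is one for $\bar{\gamma}$ with the same image. Setting $\bar{t}_j:=1-t_{\ell-j}$ and $\bar{k}_j:=k_{\ell-1-j}$ yields an incrementation of $\bar{\gamma}$: the containment conditions transfer segment-by-segment, and at each interior break the unordered label pair, the break point itself, and hence the distinguished connected component of the relevant intersection are all unchanged, while the initial and terminal labels become $i_2$ and $i_1$.

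For transitivity, given thinly null-homotopic loops $\gamma$ and $\delta$ at $p$ incremented from $i_1$ to $i_2$ and from $i_2$ to $i_3$, chaining the base-point equalities gives $\alpha^{i_1}(q_{{}_H}(\mathscr{g}_1))=\alpha^{i_3}(q_{{}_H}(\mathscr{g}_3))$, and I would take the concatenation $\gamma\star\delta$. It is thinly null-homotopic: deform the $\delta$-half to the constant loop at $p$ within $\delta([0,1])$ using the thin null-homotopy of $\delta$ while holding the $\gamma$-half fixed, then deform the $\gamma$-half to $p$ within $\gamma([0,1])$; the total null-homotopy has image $\gamma([0,1])\cup\delta([0,1])=(\gamma\star\delta)([0,1])$ and is therefore thin. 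The one genuine obstacle is the incrementation, and it is the step I expect to be most delicate: the terminal segment of $\gamma$ and the initial segment of $\delta$ both carry the label $i_2$, so the naive juxtaposition violates the step condition $|k_j-k_{j+1}|=1$ at the junction. This is repaired by \emph{merging} those two segments into one, which is legitimate precisely because both map into $\alpha^{i_2}(U)$ and meet at $p\in\alpha^{i_2}(U)$, so their union is a single $\alpha^{i_2}(U)$-segment and the junction stops being a break point; every remaining break point inherits its $\pm 1$ label step and its connected-component condition verbatim from the two given incrementations. The merged data is then a valid incrementation of $\gamma\star\delta$ from $i_1$ to $i_3$, giving $\tilde{\alpha}^{i_1}(q_{{}_H}(\mathscr{g}_1))\sim\tilde{\alpha}^{i_3}(q_{{}_H}(\mathscr{g}_3))$ and completing the argument.
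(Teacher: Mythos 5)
Your proof is correct, and its skeleton is the same as the paper's: handle the base-sense relation first (constant loop for reflexivity, the reversed loop with reversed partition and labels for symmetry, concatenation for transitivity), then transfer to the bundle sense. Two differences are worth recording. First, your reduction of the bundle case---as the intersection of the pullback of equality in $\mathscr{G}$ along $\tilde{\alpha}^i(\mathscr{g})\mapsto\alpha^i(\mathscr{g})$ with the pullback of the base relation along $\tilde{\alpha}^i(\mathscr{g})\mapsto\tilde{\alpha}^i(q_{{}_H}(\mathscr{g}))$---is a slicker packaging of what the paper verifies property-by-property; the content is identical. Second, and more substantively, your merging of the two junction segments in the transitivity step repairs a genuine off-by-one slip in the paper's own construction. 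The paper labels the concatenated partition by $r_j=k_j$ for $j<\ell$ and $r_j=k'_{j-\ell+1}$ for $j\geq\ell$; read literally, this assigns the label $k'_1$ to the first segment of $\gamma'$, so the required containment $\gamma'([t'_0,t'_1])\subseteq\alpha^{r_\ell}(U)$ is not guaranteed, and it assigns the out-of-range label $k'_{\ell'}$ to the final segment. The evident correction $r_j=k'_{j-\ell}$ instead produces two adjacent segments both labeled $i_2$, violating the requirement $|r_j-r_{j+1}|=1$ in the definition of an incrementation. Your fix---absorbing the terminal segment of $\gamma$ and the initial segment of the second loop into a single segment labeled $i_2$, so the junction ceases to be a break point, with all remaining break points inheriting their step and connected-component conditions verbatim---is exactly what is needed, and your explicit two-stage thin null-homotopy for the concatenation also fills in a detail the paper merely asserts.
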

\begin{proof}We want to show that $\sim$ is reflexive, symmetric, and transitive.

For each $i\in\mathbb{Z}$ and $\mathscr{g}\in q_{{}_H}^{-1}(U)$, choosing $\gamma$ to be the constant path at $\alpha^i(q_{{}_H}(\mathscr{g}))$, our partition to be the trivial partition $0=t_0<t_1=1$, and $i=k_0=i$ shows us that $\tilde{\alpha}^i(\mathscr{g})\sim\tilde{\alpha}^i(\mathscr{g})$.

By definition, if $\tilde{\alpha}^{i_1}(\mathscr{g}_1)\sim\tilde{\alpha}^{i_2}(\mathscr{g}_2)$, then $\alpha^{i_1}(\mathscr{g}_1)=\alpha^{i_2}(\mathscr{g}_2)$ and there must be a thinly null-homotopic loop $\gamma:[0,1]\to M$ based at \[\gamma(0)=q_{{}_H}(\alpha^{i_1}(\mathscr{g}_1))=q_{{}_H}(\alpha^{i_2}(\mathscr{g}_2))=\gamma(1)\] with an incrementation given by a partition $0=t_0<\cdots<t_\ell=1$ and a sequence of integers $i_1=k_0,\dots,k_{\ell-1}=i_2\in\mathbb{Z}$. Consider the reverse loop $\bar{\gamma}:[0,1]\to M$ defined by $t\mapsto\gamma(1-t)$; setting $\bar{t}_j=1-t_{\ell-j}$ and $\bar{k}_j=k_{\ell-1-j}$ for each $j$, we then get a reversed incrementation from $i_2$ to $i_1$ for the thinly null-homotopic loop $\bar{\gamma}$, so $\tilde{\alpha}^{i_2}(\mathscr{g}_2)\sim\tilde{\alpha}^{i_1}(\mathscr{g}_1)$.

Similarly, if we have $\tilde{\alpha}^{i_1}(\mathscr{g}_1)\sim\tilde{\alpha}^{i_2}(\mathscr{g}_2)$ and $\tilde{\alpha}^{i_2}(\mathscr{g}_2)\sim\tilde{\alpha}^{i_3}(\mathscr{g}_3)$, then there exist corresponding thinly null-homotopic loops $\gamma$ and $\gamma'$ on $M$, together with incrementations given by $0=t_0<\cdots<t_\ell=1$ and $i_1=k_0,\dots,k_{\ell-1}=i_2\in\mathbb{Z}$ for $\gamma$, and $0=t'_0<\cdots<t'_{\ell'}=1$ and $i_2=k'_0,\dots,k'_{\ell'-1}=i_3\in\mathbb{Z}$ for $\gamma'$. To show that $\tilde{\alpha}^{i_1}(\mathscr{g}_1)\sim\tilde{\alpha}^{i_3}(\mathscr{g}_3)$, consider the concatenated loop $\gamma\star\gamma'$. This is still thinly null-homotopic, and setting \[\tau_j=\begin{cases}\frac{t_j}{2} & \text{if } j<\ell, \\ \frac{1+t'_{j-\ell}}{2} & \text{if } j\geq\ell\end{cases}\] and \[r_j=\begin{cases}k_j & \text{if } j<\ell, \\ k'_{j-\ell+1} & \text{if } j\geq\ell\end{cases}\] for each $j$, we get an incrementation for the concatenation $\gamma\star\gamma'$ comprised of the partition $0=\tau_0<\cdots<\tau_{\ell+\ell'}=1$ and labels $i_1=r_0,\dots,r_{\ell+\ell'-1}=i_3\in\mathbb{Z}$. In particular, $\tilde{\alpha}^{i_1}(\mathscr{g}_1)\sim\tilde{\alpha}^{i_3}(\mathscr{g}_3)$.\mbox{\qedhere}\end{proof}

Naturally, sprawl-equivalence induces a corresponding equivalence relation on the base manifold $\bigsqcup_{i\in\mathbb{Z}}\tilde{\alpha}^i(U)$, so that a point $\tilde{\alpha}^{i_1}(q_{{}_H}(\mathscr{g}_1))$ is identified with $\tilde{\alpha}^{i_2}(q_{{}_H}(\mathscr{g}_2))$ if and only if $\tilde{\alpha}^{i_1}(\mathscr{g}_1)\sim\tilde{\alpha}^{i_2}(\mathscr{g}_2h)$ for some $h\in H$. As before, we will refer to both of these equivalence relations as sprawl-equivalence, and denote them by the same symbol $\sim$.

Sprawl-equivalence is precisely the correction of the na\"{i}ve gluing that was mentioned at the start of the section; as we shall see shortly, it allows us to glue the copies $\tilde{\alpha}^i(q_{{}_H}^{-1}(U))$ together into a new principal $H$-bundle $\mathscr{F}$ such that $\tilde{\alpha}(\mathscr{e})$ coincides with $\alpha(\mathscr{e})$.

\begin{proposition}The quotient space \[\mathscr{F}=\mathscr{F}(q_{{}_H}^{-1}(U),\alpha,\mathscr{e}):=\left(\bigsqcup_{i\in\mathbb{Z}}\tilde{\alpha}^i(q_{{}_H}^{-1}(U))\right){\Big /}\sim\] admits the structure of a (smooth) principal $H$-bundle over the quotient space \[q_{{}_H}(\mathscr{F})=q_{{}_H}(\mathscr{F})(U,\alpha,\mathscr{e}):=\left(\bigsqcup_{i\in\mathbb{Z}}\tilde{\alpha}^i(U)\right){\Big /}\sim~,\] which is a smooth manifold.\end{proposition}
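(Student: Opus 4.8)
The plan is to build a smooth atlas on the base quotient out of the copies $\tilde{\alpha}^i(U)$, transfer it to a principal-bundle atlas on $\mathscr{F}$, and isolate the Hausdorff property as the one genuinely delicate point. First I would observe that $\sim$ never identifies two distinct points inside a single copy: if $\tilde{\alpha}^i(q_{{}_H}(\mathscr{g}_1))\sim\tilde{\alpha}^i(q_{{}_H}(\mathscr{g}_2))$, then $\alpha^i(q_{{}_H}(\mathscr{g}_1))=\alpha^i(q_{{}_H}(\mathscr{g}_2))$, and since $\alpha^i$ is a diffeomorphism of $M$ this forces $q_{{}_H}(\mathscr{g}_1)=q_{{}_H}(\mathscr{g}_2)$. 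Writing $\pi$ for the quotient map and $V_i:=\pi(\tilde{\alpha}^i(U))$, the composite $U\cong\tilde{\alpha}^i(U)\to V_i$ is therefore a continuous bijection. I would then check that $\sim$ is an open relation --- the saturation of an open set is open because a witnessing incremented thinly null-homotopic loop persists, after a small perturbation of its base point, as an incremented thinly null-homotopic loop, the connected-component conditions in the definition of an incrementation being open --- so that $\pi$ is open, each $V_i$ is open, and $U\to V_i$ is a homeomorphism. Composing with $\tilde{\alpha}^i(U)\cong U\subseteq M$ gives charts valued in $M$, and since $\mathbb{Z}$ is countable and $\pi$ is open, $q_{{}_H}(\mathscr{F})$ is second countable.

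On an overlap $V_i\cap V_j$, two representatives $\tilde{\alpha}^i(x)$ and $\tilde{\alpha}^j(x')$ of the same point satisfy $\alpha^i(x)=\alpha^j(x')$, so the change of coordinates is $x\mapsto x'=\alpha^{i-j}(x)$, the restriction of an iterate of the diffeomorphism $\alpha$; hence the atlas is smooth. I would also record the well-defined continuous map $\Phi:q_{{}_H}(\mathscr{F})\to M$ sending $\pi(\tilde{\alpha}^i(x))$ to $\alpha^i(x)$, which makes sense precisely because $\tilde{\alpha}^{i_1}(x_1)\sim\tilde{\alpha}^{i_2}(x_2)$ forces $\alpha^{i_1}(x_1)=\alpha^{i_2}(x_2)$; this $\Phi$ will handle the bookkeeping of base points below.

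The heart of the argument, and the reason the thin-null-homotopy correction was introduced (the na\"{i}ve gluing fails exactly here), is that $q_{{}_H}(\mathscr{F})$ is Hausdorff. Given distinct classes $P=\pi(\tilde{\alpha}^{i_1}(x_1))$ and $Q=\pi(\tilde{\alpha}^{i_2}(x_2))$, if $\Phi(P)\neq\Phi(Q)$ I separate them by pulling back disjoint neighborhoods in $M$ along $\Phi$. The delicate case is $\Phi(P)=\Phi(Q)=:b$, so that $b=\alpha^{i_1}(x_1)=\alpha^{i_2}(x_2)$ lies in the open set $\alpha^{i_1}(U)\cap\alpha^{i_2}(U)$. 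Arguing by contradiction from non-separation and first countability, I would extract a single sequence with $R_n\to P$ and $R_n\to Q$; reading $R_n$ in the two charts gives $x_1^{(n)}\to x_1$ and $x_2^{(n)}\to x_2$ in $U$ with $\tilde{\alpha}^{i_1}(x_1^{(n)})\sim\tilde{\alpha}^{i_2}(x_2^{(n)})$, hence a common base point $b_n:=\alpha^{i_1}(x_1^{(n)})=\alpha^{i_2}(x_2^{(n)})\to b$ and a thinly null-homotopic loop $\gamma^{(n)}$ at $b_n$ incremented from $i_1$ to $i_2$. For $n$ large I would pick a short path $\sigma_n$ from $b$ to $b_n$ inside $\alpha^{i_1}(U)\cap\alpha^{i_2}(U)$ and form the loop $\sigma_n\star\gamma^{(n)}\star\overline{\sigma_n}$ based at $b$: labelling $\sigma_n$ by $i_1$ and $\overline{\sigma_n}$ by $i_2$ (legitimate, since the image of $\sigma_n$ lies in both $\alpha^{i_1}(U)$ and $\alpha^{i_2}(U)$) and merging these with the first and last segments of $\gamma^{(n)}$ yields an incrementation from $i_1$ to $i_2$, while contracting $\gamma^{(n)}$ within its image and then backtracking along $\sigma_n$ exhibits the loop as thinly null-homotopic. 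This witnesses $\tilde{\alpha}^{i_1}(x_1)\sim\tilde{\alpha}^{i_2}(x_2)$, i.e.\ $P=Q$, a contradiction. I expect the real work to sit in this paragraph: confirming that the spliced loop is genuinely thin and that all increment conditions --- in particular the connected-component requirement at each crossing --- survive the grafting.

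Finally I would transfer everything to $\mathscr{F}$. The action $\tilde{\alpha}^i(\mathscr{g})\cdot h:=\tilde{\alpha}^i(\mathscr{g}h)$ descends to $\sim$ because $q_{{}_H}(\mathscr{g}h)=q_{{}_H}(\mathscr{g})$ leaves the base-equivalence unchanged and $\alpha$ is $H$-equivariant, and it is free since the $H$-action on $\mathscr{G}$ is; the projection $\mathscr{F}\to q_{{}_H}(\mathscr{F})$ given by $\pi(\tilde{\alpha}^i(\mathscr{g}))\mapsto\pi(\tilde{\alpha}^i(q_{{}_H}(\mathscr{g})))$ is well-defined because bundle-equivalence implies base-equivalence. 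Over each chart $V_i$ its restriction is modeled on the principal $H$-bundle $q_{{}_H}^{-1}(U)\to U$, so $\mathscr{F}$ inherits local trivializations with smooth, $H$-equivariant transition functions $\mathscr{g}\mapsto\alpha^{i-j}(\mathscr{g})$; Hausdorffness and second countability of the total space then follow from those of the base together with local triviality (or from a verbatim repeat of the argument above using the bundle-level analogue of $\Phi$). This exhibits $\mathscr{F}$ as a smooth principal $H$-bundle over the smooth manifold $q_{{}_H}(\mathscr{F})$.
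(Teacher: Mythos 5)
Your proposal is correct, and its skeleton---no identifications within a single copy, charts from the copies of $U$ glued by iterates of $\alpha$, Hausdorffness isolated as the only delicate point, then transfer of everything to the bundle level---is the paper's skeleton as well; in particular your splice $\sigma_n\star\gamma^{(n)}\star\overline{\sigma_n}$, with its two checks (merging adjacent segments carrying equal labels, and contracting $\gamma^{(n)}$ within its own image before backtracking along $\sigma_n$), is exactly the paper's key move $\zeta\star\gamma\star\bar{\zeta}$, and both checks do go through. The one real divergence is how the contradiction is packaged. The paper never passes to sequences: when $\alpha^{i_1}(q_{{}_H}(\mathscr{g}_1))=\alpha^{i_2}(q_{{}_H}(\mathscr{g}_2))=b$ but no witnessing loop exists, it takes $V$ to be the path component of $b$ in $\alpha^{i_1}(U)\cap\alpha^{i_2}(U)$ and shows directly that $\tilde{\alpha}^{i_1}(\alpha^{-i_1}(V))$ and $\tilde{\alpha}^{i_2}(\alpha^{-i_2}(V))$ are disjoint neighborhoods, since any common point would supply the connecting path $\zeta$ inside $V$ along which the splice produces a witnessing loop at $b$. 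Your version reaches the same splice through non-separation plus first countability and a sequence $R_n$, which obliges you to first prove that the quotient map is open (so that the $V_i$ are charts and the quotient is first countable); that openness claim is true, and is itself proved by the same perturbation/splice device, so your route is sound, but it makes the splice do double duty and imports sequence apparatus that the paper's path-component formulation avoids. In partial compensation, your explicit attention to openness of $\pi$ and to second countability fills in a point the paper passes over with the bare assertion that the copies are ``glued together along open sets,'' so your write-up is in that one respect more complete; the bundle-level portions of the two arguments coincide.
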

\begin{proof}For each $i\in\mathbb{Z}$, $\tilde{\alpha}^i(\mathscr{g}_1)\sim\tilde{\alpha}^i(\mathscr{g}_2)$ if and only if $\mathscr{g}_1=\mathscr{g}_2$.
This means that the quotient map by $\sim$ is injective when restricted to each $\tilde{\alpha}^i(q_{{}_H}^{-1}(U))$. Moreover, open subsets $V\subseteq\bigsqcup_{i\in\mathbb{Z}}\tilde{\alpha}^i(q_{{}_H}^{-1}(U))$ are mapped to open subsets of the quotient: if $\tilde{\alpha}^{i_1}(\mathscr{g}_1)\in V$ and $\tilde{\alpha}^{i_1}(\mathscr{g}_1)\sim\tilde{\alpha}^{i_2}(\mathscr{g}_2)$, with $\gamma$ the corresponding thinly null-homotopic loop incremented from $i_1$ to $i_2$, then for each small path $\delta$ in $\alpha^{i_1}(q_{{}_H}^{-1}(U))\cap\alpha^{i_2}(q_{{}_H}^{-1}(U))$ such that $\delta(0)=\alpha^{i_1}(\mathscr{g}_1)=\alpha^{i_2}(\mathscr{g}_2)$ and $\tilde{\alpha}^{i_1}(\alpha^{-i_1}(\delta(1)))\in V$, we can take the concatenation $\overline{q_{{}_H}(\delta)}\star\gamma\star q_{{}_H}(\delta)$ of the reverse of $q_{{}_H}(\delta)$ with $\gamma$ with $q_{{}_H}(\delta)$ to get a thinly null-homotopic loop incremented from $i_1$ to $i_2$, which tells us that $\tilde{\alpha}^{i_2}(\alpha^{-i_2}(\delta(1)))$ is sprawl-equivalent to an element $\tilde{\alpha}^{i_1}(\alpha^{-i_1}(\delta(1)))\in V$, hence the union of sprawl-equivalence classes of elements of $V$ is open. Thus, the quotient map from $\bigsqcup_{i\in\mathbb{Z}}\tilde{\alpha}^i(q_{{}_H}^{-1}(U))$ to $\mathscr{F}$ restricts to an embedding on each $\tilde{\alpha}^i(q_{{}_H}^{-1}(U))$, so it makes sense to identify each $\tilde{\alpha}^i(q_{{}_H}^{-1}(U))$ with its image in $\mathscr{F}$. Similarly, each $\tilde{\alpha}^i(U)$ naturally embeds into $q_{{}_H}(\mathscr{F})$, so we can identify each $\tilde{\alpha}^i(U)$ with its image in the quotient space $q_{{}_H}(\mathscr{F})$.


For every element $h\in H$, we have $\tilde{\alpha}^{i_1}(\mathscr{g}_1)\sim\tilde{\alpha}^{i_2}(\mathscr{g}_2)$ if and only if $\tilde{\alpha}^{i_1}(\mathscr{g}_1)h\sim\tilde{\alpha}^{i_2}(\mathscr{g}_2)h$, and $\tilde{\alpha}^i(\mathscr{g})\sim\tilde{\alpha}^i(\mathscr{g})h$ if and only if $h$ is the identity element because otherwise $\alpha^i(\mathscr{g})\neq\alpha^i(\mathscr{g})h$. Because of this, $\mathscr{F}$ inherits a free right $H$-action that coincides with the smooth free right action of $H$ on each $\tilde{\alpha}^i(q_{{}_H}^{-1}(U))$. Since $\tilde{\alpha}^{i_1}(\mathscr{g}_1)\sim\tilde{\alpha}^{i_2}(\mathscr{g}_2)$ implies $\tilde{\alpha}^{i_1}(q_{{}_H}(\mathscr{g}_1))\sim\tilde{\alpha}^{i_2}(q_{{}_H}(\mathscr{g}_2))$, we get a natural map $q_{{}_H}:\mathscr{F}\to q_{{}_H}(\mathscr{F})$ given by $q_{{}_H}(\tilde{\alpha}^i(\mathscr{g})):=\tilde{\alpha}^i(q_{{}_H}(\mathscr{g}))$. By definition, this coincides with the bundle map $q_{{}_H}:\tilde{\alpha}^i(q_{{}_H}^{-1}(U))\to\tilde{\alpha}^i(U)$ for each $i$, so $\mathscr{F}$ is a principal $H$-bundle over $q_{{}_H}(\mathscr{F})$.

It remains to show that $q_{{}_H}(\mathscr{F})$ is a smooth manifold. Note that $U$ naturally inherits a smooth structure from $M$, and $q_{{}_H}(\mathscr{F})$ is a union of embedded copies of $U$ by definition. Moreover, $\tilde{\alpha}^{i_1}(\mathscr{g}_1)\sim\tilde{\alpha}^{i_2}(\mathscr{g}_2)$ implies $\alpha^{i_1}(\mathscr{g}_1)=\alpha^{i_2}(\mathscr{g}_2)$, hence $\mathscr{g}_2=\alpha^{i_1-i_2}(\mathscr{g}_1)$, so the embedded copies of $U$ are glued together in $q_{{}_H}(\mathscr{F})$ along open sets by iterates of the diffeomorphism $\alpha$. In particular, we just need to show that $q_{{}_H}(\mathscr{F})$ is Hausdorff to verify that it admits the structure of a smooth manifold.

To this end, suppose that $\tilde{\alpha}^{i_1}(q_{{}_H}(\mathscr{g}_1))$ and $\tilde{\alpha}^{i_2}(q_{{}_H}(\mathscr{g}_2))$ are distinct points of the quotient space $q_{{}_H}(\mathscr{F})$. There are two possible cases: either $\alpha^{i_1}(q_{{}_H}(\mathscr{g}_1))\neq\alpha^{i_2}(q_{{}_H}(\mathscr{g}_2))$, or $\alpha^{i_1}(q_{{}_H}(\mathscr{g}_1))=\alpha^{i_2}(q_{{}_H}(\mathscr{g}_2))$ but there is no corresponding thinly null-homotopic loop incremented from $i_1$ to $i_2$. In the first case, there exist disjoint open neighborhoods $V_1\subseteq\alpha^{i_1}(U)$ of $\alpha^{i_1}(q_{{}_H}(\mathscr{g}_1))$ and $V_2\subseteq\alpha^{i_2}(U)$ of $\alpha^{i_2}(q_{{}_H}(\mathscr{g}_2))$ because $M$ is Hausdorff, hence $\tilde{\alpha}^{i_1}(\alpha^{-i_1}(V_1))$ and $\tilde{\alpha}^{i_2}(\alpha^{-i_2}(V_2))$ are disjoint open neighborhoods of $\tilde{\alpha}^{i_1}(q_{{}_H}(\mathscr{g}_1))$ and $\tilde{\alpha}^{i_2}(q_{{}_H}(\mathscr{g}_2))$, respectively. In the second case, let $V$ be the path component of the point $\alpha^{i_1}(q_{{}_H}(\mathscr{g}_1))=\alpha^{i_2}(q_{{}_H}(\mathscr{g}_2))$ in the intersection $\alpha^{i_1}(U)\cap\alpha^{i_2}(U)$, so that $\tilde{\alpha}^{i_1}(\alpha^{-i_1}(V))$ is an open neighborhood of $\tilde{\alpha}^{i_1}(q_{{}_H}(\mathscr{g}_1))$ and $\tilde{\alpha}^{i_2}(\alpha^{-i_2}(V))$ is an open neighborhood of $\tilde{\alpha}^{i_2}(q_{{}_H}(\mathscr{g}_2))$. These two neighborhoods must be disjoint: if there were a point $q_{{}_H}(\mathscr{f})$ in their intersection, then there would be a path \[\zeta:[0,1]\to V\subseteq\alpha^{i_1}(U)\cap\alpha^{i_2}(U)\] from $\alpha^{i_1}(q_{{}_H}(\mathscr{g}_1))=(\alpha^{i_1}\circ(\tilde{\alpha}^{i_1})^{-1})(\tilde{\alpha}^{i_1}(q_{{}_H}(\mathscr{g}_1)))$ to $(\alpha^{i_1}\circ(\tilde{\alpha}^{i_1})^{-1})(q_{{}_H}(\mathscr{f}))$, so if $\gamma$ were the thinly null-homotopic loop based at \[(\alpha^{i_1}\circ(\tilde{\alpha}^{i_1})^{-1})(q_{{}_H}(\mathscr{f}))=(\alpha^{i_2}\circ(\tilde{\alpha}^{i_2})^{-1})(q_{{}_H}(\mathscr{f}))\] incremented from $i_1$ to $i_2$ that must exist for the point $q_{{}_H}(\mathscr{f})$ to be in the intersection of $\tilde{\alpha}^{i_1}(U)$ and $\tilde{\alpha}^{i_2}(U)$, then the concatenation given by $\zeta\star\gamma\star\bar{\zeta}$ would be a thinly null-homotopic loop based at the point $\alpha^{i_1}(q_{{}_H}(\mathscr{g}_1))=\alpha^{i_2}(q_{{}_H}(\mathscr{g}_2))$ and incremented from $i_1$ to $i_2$. This would be a contradiction, since $\tilde{\alpha}^{i_1}(q_{{}_H}(\mathscr{g}_1))$ and $\tilde{\alpha}^{i_2}(q_{{}_H}(\mathscr{g}_2))$ are distinct by assumption, so $\tilde{\alpha}^{i_1}(\alpha^{-i_1}(V))$ and $\tilde{\alpha}^{i_2}(\alpha^{-i_2}(V))$ must be disjoint. Thus, $q_{{}_H}(\mathscr{F})$ is Hausdorff.\mbox{\qedhere}\end{proof}

To imbue this new principal $H$-bundle with the structure of a Cartan geometry, we will use a natural map from $\mathscr{F}$ to $\mathscr{G}$ in order to pull the Cartan connection on $\mathscr{G}$ back to $\mathscr{F}$. This map, called the \emph{sprawl map}, is precisely the one obtained by identifying each $\tilde{\alpha}^i(q_{{}_H}^{-1}(U))$ embedded in $\mathscr{F}$ with the corresponding $\alpha^i(q_{{}_H}^{-1}(U))$ in $\mathscr{G}$.

\begin{definition}The map $\sigma:\mathscr{F}\to\mathscr{G}$ given by $\tilde{\alpha}^i(\mathscr{g})\mapsto\alpha^i(\mathscr{g})$ is called the \emph{sprawl map} for $(\mathscr{G},\omega)$.\end{definition}

Before moving on to defining the sprawl, let us make two observations about the sprawl map. First, $\sigma$ is well-defined: $\tilde{\alpha}^{i_1}(\mathscr{g}_1)\sim\tilde{\alpha}^{i_2}(\mathscr{g}_2)$ only if \[\sigma(\tilde{\alpha}^{i_1}(\mathscr{g}_1))=\alpha^{i_1}(\mathscr{g}_1)=\alpha^{i_2}(\mathscr{g}_2)=\sigma(\tilde{\alpha}^{i_2}(\mathscr{g}_2)),\] so sprawl-equivalent elements have the same image under $\sigma$. Second, $\sigma$ is an $H$-equivariant local diffeomorphism, since it coincides with the natural $H$-equivariant diffeomorphism between $\tilde{\alpha}^i(q_{{}_H}^{-1}(U))$ and $\alpha^i(q_{{}_H}^{-1}(U))$ for each $i\in\mathbb{Z}$.

With that, we can finally define the sprawl.

\begin{definition}\label{sprawldef} The \emph{sprawl of $(q_{{}_H}^{-1}(U),\omega)$ generated by $\alpha$ from $\mathscr{e}$} is the Cartan geometry $(\mathscr{F},\sigma^*\omega)$ of type $(G,H)$ over $q_{{}_H}(\mathscr{F})$, where $\sigma$ is the sprawl map.\end{definition}

Crucially, note that we have constructed $(\mathscr{F},\sigma^*\omega)$ in such a way as to make the map \[\tilde{\alpha}:\mathscr{F}\to\mathscr{F},\,\tilde{\alpha}^i(\mathscr{g})\mapsto\tilde{\alpha}^{i+1}(\mathscr{g})\] into an automorphism. Indeed, $\sigma$ naturally satisfies $\sigma\circ\tilde{\alpha}=\alpha\circ\sigma$, so \[\tilde{\alpha}^*(\sigma^*\omega)=(\sigma\circ\tilde{\alpha})^*\omega=(\alpha\circ\sigma)^*\omega=\sigma^*(\alpha^*\omega)=\sigma^*\omega.\] Moreover, $\tilde{\alpha}$ and $\alpha$ must coincide on the distinguished element $\mathscr{e}$ under the identification between $\tilde{\alpha}^0(q_{{}_H}^{-1}(U))$ and $q_{{}_H}^{-1}(U)$, so $\tilde{\alpha}$ has the same local behavior as $\alpha$ on $q_{{}H}^{-1}(U)$ near $\mathscr{e}$.


\subsection{The universal property of sprawls}\label{sprawl3}
We would like to think of the automorphism $\tilde{\alpha}$ on the sprawl $(\mathscr{F},\sigma^*\omega)$ as a kind of universal example of an automorphism with the same behavior as $\alpha$ near $\mathscr{e}$. Theorem \ref{universalproperty} will make precise what we mean by ``universal example'', but first, we will need two lemmas.

First, we need to show that lifts of incremented paths to $\mathscr{G}$ further lift to paths on $\mathscr{F}$ via the sprawl map, and that the choice of lift only depends on the initial label of the underlying incrementation.

\begin{lemma}\label{incrlift} If $\gamma:[0,1]\to\mathscr{G}$ is a path in $\mathscr{G}$ such that its image $q_{{}_H}(\gamma)$ in $M$ has an incrementation, then there exists a lift $\tilde{\gamma}:[0,1]\to\mathscr{F}$ of $\gamma$, so that $\sigma\circ\tilde{\gamma}=\gamma$. Moreover, this choice of lift only depends on the initial label of the incrementation of $q_{{}_H}(\gamma)$.\end{lemma}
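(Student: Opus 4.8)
The plan is to build the lift $\tilde{\gamma}$ piece by piece from the data of a fixed incrementation $0=t_0<\cdots<t_\ell=1$ with labels $k_0,\dots,k_{\ell-1}$ of $q_{{}_H}(\gamma)$, then glue the pieces using exactly the identifications that define $\mathscr{F}$, and finally invoke a uniqueness principle to pin down the dependence on the initial label. On the segment $[t_j,t_{j+1}]$ we have $q_{{}_H}(\gamma)([t_j,t_{j+1}])\subseteq\alpha^{k_j}(U)$, so $\gamma([t_j,t_{j+1}])\subseteq q_{{}_H}^{-1}(\alpha^{k_j}(U))=\alpha^{k_j}(q_{{}_H}^{-1}(U))$. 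Since $\sigma$ restricts to the $H$-equivariant diffeomorphism $\tilde{\alpha}^{k_j}(\mathscr{g})\mapsto\alpha^{k_j}(\mathscr{g})$ of $\tilde{\alpha}^{k_j}(q_{{}_H}^{-1}(U))$ onto $\alpha^{k_j}(q_{{}_H}^{-1}(U))$, I would define $\tilde{\gamma}$ on this segment to be $\tilde{\alpha}^{k_j}\circ\alpha^{-k_j}\circ\gamma$, a smooth path into the copy $\tilde{\alpha}^{k_j}(q_{{}_H}^{-1}(U))\subseteq\mathscr{F}$ satisfying $\sigma\circ\tilde{\gamma}=\gamma$ there.

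The crux is checking that adjacent segments agree at each interior node $t_{j+1}$, i.e.\ that $\tilde{\alpha}^{k_j}(\alpha^{-k_j}(\gamma(t_{j+1})))$ and $\tilde{\alpha}^{k_{j+1}}(\alpha^{-k_{j+1}}(\gamma(t_{j+1})))$ are sprawl-equivalent. Their images under $\sigma$ both equal $\gamma(t_{j+1})$, so the first clause of bundle sprawl-equivalence holds; for the base clause I would exhibit a thinly null-homotopic loop at $p:=q_{{}_H}(\gamma(t_{j+1}))$ incremented from $k_j$ to $k_{j+1}$. The natural candidate is the \emph{constant} loop at $p$, which is thinly null-homotopic via the constant homotopy, equipped with a two-block partition carrying labels $k_j,k_{j+1}$. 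Validity of this incrementation is precisely what the incrementation of $\gamma$ hands us: $p$ lies in both $\alpha^{k_j}(U)$ and $\alpha^{k_{j+1}}(U)$, one has $|k_j-k_{j+1}|=1$, and the node condition puts $p$ in the connected component of $\alpha^{k_j}(U)\cap\alpha^{k_{j+1}}(U)$ containing $q_{{}_H}(\alpha^{\max(k_j,k_{j+1})}(\mathscr{e}))$. With the nodes matched as points of $\mathscr{F}$, the gluing lemma yields a continuous, indeed piecewise smooth, map $\tilde{\gamma}:[0,1]\to\mathscr{F}$ with $\sigma\circ\tilde{\gamma}=\gamma$.

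For the final claim I would isolate a general uniqueness statement: since $\sigma$ is a local diffeomorphism and $\mathscr{F}$ is Hausdorff, any two continuous lifts of $\gamma$ through $\sigma$ that agree at a single parameter agree on all of $[0,1]$. This follows from the standard clopen argument — the agreement set is open by the local injectivity of $\sigma$, and closed because it is the preimage of the diagonal of $\mathscr{F}\times\mathscr{F}$ under a continuous map into a Hausdorff space, and $[0,1]$ is connected. The lift built above starts at $\tilde{\gamma}(0)=\tilde{\alpha}^{k_0}(\alpha^{-k_0}(\gamma(0)))$, which depends only on the initial label $k_0$; hence two incrementations of $q_{{}_H}(\gamma)$ sharing an initial label produce lifts with the same starting point, which by uniqueness must coincide.

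I expect the only genuinely delicate step to be the gluing at interior nodes — specifically, recognizing that it is the \emph{constant} loop, rather than any portion of $\gamma$ itself, that realizes the required sprawl-equivalence, and confirming that the node condition in the definition of an incrementation matches the base sprawl-equivalence condition exactly. Everything else — the segmentwise definition, the identity $\sigma\circ\tilde{\gamma}=\gamma$, and the uniqueness argument — is routine once this identification is in place.
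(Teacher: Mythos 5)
Your proposal is correct, and the construction is essentially the paper's: you build $\tilde{\gamma}$ segment by segment through the inverses of $\sigma|_{\tilde{\alpha}^{k_j}(q_{{}_H}^{-1}(U))}$, and you glue at interior nodes by observing that the constant loop at $q_{{}_H}(\gamma(t_{j+1}))$, with a two-block partition labeled $k_j,k_{j+1}$, is a thinly null-homotopic loop incremented from $k_j$ to $k_{j+1}$ precisely because the node condition of the given incrementation is the base sprawl-equivalence condition; this is exactly the paper's gluing step. Where you diverge is the final uniqueness claim. The paper argues via developments: since $\sigma$ is a geometric map, any lift $\zeta$ satisfies $\zeta_G=\gamma_G$, and two paths in a Cartan geometry with the same development and the same starting point coincide; the starting point is determined by the initial label, so the lift is too. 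You instead invoke the standard topological uniqueness of lifts through a local homeomorphism into a Hausdorff space (open by local injectivity of $\sigma$, closed by Hausdorffness of $\mathscr{F}$, then connectedness of $[0,1]$). Both are valid: your argument is more elementary and applies to arbitrary continuous lifts, but it leans on the Hausdorffness of $\mathscr{F}$ established earlier in the paper, whereas the paper's development argument is purely Cartan-geometric, needs no separation axiom, and is the mechanism reused later (e.g.\ in Theorem \ref{universalproperty}), which is presumably why the author phrases it that way.
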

\begin{proof}Suppose that the incrementation of $q_{{}_H}(\gamma)$ is the one given by the partition $0=t_0<\cdots<t_\ell=1$ and labels $k_0,\dots,k_{\ell-1}\in\mathbb{Z}$. We can construct a path $\tilde{\gamma}$ in $\mathscr{F}$ as follows. First, let us direct our attention to $\alpha^{k_0}(q_{{}_H}^{-1}(U))$, where the path $\gamma$ starts. When restricted to $\tilde{\alpha}^{k_0}(q_{{}_H}^{-1}(U))$, $\sigma$ coincides with the identification between $\tilde{\alpha}^{k_0}(q_{{}_H}^{-1}(U))$ and $\alpha^{k_0}(q_{{}_H}^{-1}(U))$, so we can simply define \[\tilde{\gamma}|_{[0,t_1]}:=(\sigma|_{\tilde{\alpha}^{k_0}(q_{{}_H}^{-1}(U))})^{-1}\circ\gamma|_{[0,t_1]}.\] Next, the incrementation tells us that $q_{{}_H}(\gamma(t_1))$ is in the connected component of $\alpha^{\max(k_0,k_1)}(q_{{}_H}(\mathscr{e}))$ in $\alpha^{k_0}(U)\cap\alpha^{k_1}(U)$, so that the constant path at $q_{{}_H}(\gamma(t_1))$ is a thinly null-homotopic loop incremented from $k_0$ to $k_1$. In particular, this tells us that $\tilde{\gamma}(t_1)\in\tilde{\alpha}^{k_0}(q_{{}_H}^{-1}(U))\cap\tilde{\alpha}^{k_1}(q_{{}_H}^{-1}(U))$, so that we can extend the path $\tilde{\gamma}$ by again restricting to where $\sigma$ is a diffeomorphism: \[\tilde{\gamma}|_{[t_1,t_2]}:=(\sigma|_{\tilde{\alpha}^{k_1}(q_{{}_H}^{-1}(U))})^{-1}\circ\gamma|_{[t_1,t_2]}.\] By iterating this procedure, defining \[\tilde{\gamma}|_{[t_j,t_{j+1}]}:=(\sigma|_{\tilde{\alpha}^{k_j}(q_{{}_H}^{-1}(U))})^{-1}\circ\gamma|_{[t_j,t_{j+1}]}\] for each $j$, we get a well-defined lift $\tilde{\gamma}$ of $\gamma$ to $\mathscr{F}$, with $\sigma\circ\tilde{\gamma}=\gamma$.

Now, suppose $\zeta:[0,1]\to\mathscr{F}$ is another lift of $\gamma$ to $\mathscr{F}$, constructed in the same way from a possibly different incrementation of $q_{{}_H}(\gamma)$. Then, by definition, we would again have $\sigma\circ\zeta=\gamma$, and since $\sigma$ is a geometric map, this means that $\zeta$, $\tilde{\gamma}$, and $\gamma$ would all have the same development: $\zeta_G=\tilde{\gamma}_G=\gamma_G$. In particular, since the starting points of $\zeta$ and $\tilde{\gamma}$ are uniquely determined by the initial label for $q_{{}_H}(\gamma)$, we must have $\zeta=\tilde{\gamma}$ if their corresponding incrementations have the same initial label, since then they have the same starting point and the same developments.\mbox{\qedhere}\end{proof}

Our second lemma shows us that development completely determines when a path is a thinly null-homotopic loop.

\begin{lemma}\label{backtracking} A path $\gamma:[0,1]\to\mathscr{G}$ is a thinly null-homotopic loop if and only if its development $\gamma_G:[0,1]\to G$ is.\end{lemma}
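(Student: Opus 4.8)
The plan is to exploit the curvature identity $\mathrm{d}\omega+\tfrac12[\omega,\omega]=\Omega$ together with the fact that a thin homotopy has rank at most one, so that it annihilates every two-form. For the forward direction, suppose $c:[0,1]^2\to\mathscr{G}$, written $(s,t)\mapsto c_s(t)$, is a thin null-homotopy of $\gamma$, so that $c([0,1]^2)=\gamma([0,1])$. Since the image is one-dimensional, the partial velocities $\partial_s c$ and $\partial_t c$ are everywhere tangent to the curve $\gamma([0,1])$ and hence linearly dependent; thus the pullback $c^*\Omega$ vanishes identically. Setting $\theta:=c^*\omega$, naturality of $\mathrm{d}$ and of the bracket gives $\mathrm{d}\theta+\tfrac12[\theta,\theta]=c^*\Omega=0$, so $\theta$ is a flat $\mathfrak{g}$-valued one-form on the simply connected square. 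By the converse of the fundamental theorem for Maurer--Cartan forms (Chapter 3, Section 7 of \cite{Sharpe}), there is a unique $C:[0,1]^2\to G$ with $C^*\MC{G}=\theta$ and $C(0,0)=e$.

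Next I would read off the boundary behavior of $C$. On any edge where $c$ is constant, $\theta$ annihilates the corresponding direction and forces $C$ to be constant there: from $c_s(0)=c_s(1)=\gamma(0)$ and $c_1\equiv\gamma(0)$ we get $C(s,0)=e$, $C(s,1)=C(0,1)$, and $C(1,\cdot)\equiv e$. On the edge $s=0$ the $t$-component of $\theta$ is $\omega(\dot\gamma)$, so by uniqueness of developments $C(0,\cdot)=\gamma_G$. Chasing the corner $C(1,1)=e$ along the constant edge $C(\cdot,1)$ then yields $\gamma_G(1)=C(0,1)=e$, so $\gamma_G$ is a loop and $C$ is a null-homotopy of it. It remains to verify that $C$ is \emph{thin}, i.e. $C([0,1]^2)=\gamma_G([0,1])$. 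Here I would use flatness to make development path-independent: for any path $p$ in the square from $(0,0)$ to $(s,t)$, the value $C(s,t)$ is the development of $c\circ p$, a curve lying in the one-dimensional set $\gamma([0,1])$. The containment $C([0,1]^2)\subseteq\gamma_G([0,1])$ then follows by factoring such curves through $\gamma$ on the open dense locus where $\gamma$ is a local embedding and extending by continuity; the reverse containment is immediate from $C(0,\cdot)=\gamma_G$.

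For the converse, I would run the same machinery in the opposite direction. Given a thin null-homotopy $C$ of $\gamma_G$, the form $\theta:=C^*\MC{G}$ is automatically flat, and I would integrate it \emph{into} $\mathscr{G}$ starting from $\gamma(0)$ to produce $c:[0,1]^2\to\mathscr{G}$ with $c^*\omega=\theta$. The only possible obstruction is incompleteness---a lifted path trying to leave $\mathscr{G}$---but thinness confines the entire homotopy to the compact curve $\gamma_G([0,1])$, over which $\gamma$ already furnishes a lift; lifting each slice by following $\gamma$ (again factoring through the local-embedding locus) keeps $c$ inside $\gamma([0,1])$ and recovers $c_0=\gamma$. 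The boundary computation above, read in reverse, shows $c$ is a thin null-homotopy of $\gamma$.

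The routine parts are the flatness computation and the boundary bookkeeping. The genuine obstacle is the \emph{thinness}/image-equality step in both directions: passing from ``rank at most one'' to the exact identity $C([0,1]^2)=\gamma_G([0,1])$ (respectively, lifting while remaining inside $\gamma([0,1])$) is delicate precisely because $\gamma$ is a loop and may be non-injective. The observation that rescues it is that the existence of a thin null-homotopy forces $\gamma$ to be contractible \emph{within its own image}, so the curve is essentially tree-like; at the self-intersection points this compatibility is exactly what prevents a developed (or lifted) slice from branching off of $\gamma_G([0,1])$ (resp. $\gamma([0,1])$). I would isolate this as a separate combinatorial lemma about a piecewise-smooth curve carried inside the image of another, proved by subdividing at the finitely many singular and self-intersection parameters.
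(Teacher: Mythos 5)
Your forward direction is essentially the paper's: thinness forces $c^*\Omega=0$, so $c^*\omega$ satisfies the Maurer--Cartan equation and the fundamental theorem of nonabelian calculus (Theorem 7.14, Chapter 3 of \cite{Sharpe}) integrates it to $C:[0,1]^2\to G$, and your boundary bookkeeping (including $\gamma_G(1)=e$) matches what the paper does. The genuine gap is in your converse. ``Running the same machinery in the opposite direction'' is not available: the fundamental theorem integrates Maurer--Cartan-flat forms into the Lie group $G$, whose canonical form is flat, but there is no analogous theorem producing a map into $(\mathscr{G},\omega)$ with $c^*\omega=C^*\MC{G}$, because $\omega$ is curved. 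Such a $c$ need not exist even locally for an arbitrary smooth $C$: any solution would satisfy $c^*\Omega=\mathrm{d}(C^*\MC{G})+\tfrac{1}{2}[C^*\MC{G},C^*\MC{G}]=0$, a nontrivial constraint wherever $\Omega\neq 0$. So incompleteness is \emph{not} ``the only possible obstruction''; curvature is, and overcoming it is exactly what occupies most of the paper's proof. The paper forms $\zeta:=(c_G\circ\pi_{[0,1]^2})^*\MC{G}-\pi_\mathscr{G}^*\omega$ on $[0,1]^2\times\mathscr{G}$ and shows $\ker(\zeta)$ is an \emph{integrable} two-plane distribution precisely because $c_G$ has rank at most one, so that $(\pi_\mathscr{G})_*\ker(\zeta)$ is at most one-dimensional and $\pi_\mathscr{G}^*\Omega$ dies on $\ker(\zeta)$; Frobenius then gives local solutions, which are glued using uniqueness of developments. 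Note also that defining $c$ slice-by-slice by developing each $C_s$ into $\mathscr{G}$ (the ODE you implicitly invoke) only yields $c^*\omega=C^*\MC{G}$ in the $t$-direction; that the $s$-derivatives also match, and that $c$ is smooth in $s$, is again the integrability statement. This is where thinness really enters the converse, and it is absent from your sketch.

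Separately, the ``combinatorial lemma'' you propose for the image identity $C([0,1]^2)=\gamma_G([0,1])$ is false as stated. Take the flat geometry of type $(\mathbb{R}^2,\{0\})$ on the torus $\mathbb{R}^2/\mathbb{Z}^2$, let $A$ be the loop $t\mapsto(t,0)$, and let $\gamma=A\star\bar{A}$. The loop $A\star A\star\bar{A}\star\bar{A}$ lies in $\gamma([0,1])$ and is based at $\gamma(0)$, yet its development reaches $(2,0)\notin\gamma_G([0,1])=[0,1]\times\{0\}$; moreover it genuinely occurs as a slice of a legitimate thin null-homotopy of $\gamma$ (homotope $A\star\bar{A}$ to $A\star A\star\bar{A}\star\bar{A}$ inside the circle, then contract), so no argument ``factoring through the local-embedding locus'' can establish the containment for an arbitrary thin $c$ --- and smooth loops need not have finitely many singular or self-intersection parameters in any case. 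In fairness, the paper itself asserts $c_G([0,1]^2)=(c_G)_0([0,1])$ with no more justification, and what follows cheaply is only that $c_G$ is a null-homotopy of $\gamma_G$ of rank at most one; your instinct that this is the delicate point is sound, but your proposed repair does not close it.
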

\begin{proof}Suppose $\gamma:[0,1]\to\mathscr{G}$ is a thinly null-homotopic loop in $\mathscr{G}$. Up to smooth reparametrization, we may assume that both $\gamma$ and the thin null-homotopy $c:[0,1]^2\to\mathscr{G}$ are smooth. Because $c$ is thin, the image of $c$ is at most one-dimensional, so $c^*\omega$ satisfies $\mathrm{d}(c^*\omega)+\frac{1}{2}[c^*\omega,c^*\omega]=0$. By the fundamental theorem of nonabelian calculus (Theorem 7.14 in Chapter 3 of \cite{Sharpe1997}), it follows that there is a unique smooth map $c_G:[0,1]^2\to G$ such that both $(c_G)_0(0)=e$ and $c_G^*\MC{G}=c^*\omega$; because $\gamma=c_0$ and $\gamma_G(0)=e=(c_G)_0(0)$, this must also satisfy $(c_G)_0=\gamma_G$. Since $c_G([0,1]^2)=(c_G)_0([0,1])$ and $c_G$ is constant along $[0,1]\times\{0\}$, $\{1\}\times [0,1]$, and $[0,1]\times\{1\}$, we see that $c_G$ is a thin null-homotopy from $\gamma_G$ to the constant path at $e$.

Conversely, suppose $\gamma_G$ is a thinly null-homotopic loop. Again, up to smooth reparametrization, we may assume that both $\gamma$ and the thin null-homotopy $c_G:[0,1]^2\to G$ with $(c_G)_0=\gamma_G$ are smooth. Our strategy is essentially to just modify the local version of the fundamental theorem of nonabelian calculus to show that a map $c:[0,1]^2\to\mathscr{G}$ with $c^*\omega=c_G^*\MC{G}$ exists locally, then build the map from these local pieces starting at $c_0(0)=\gamma(0)$. Since such a map $c$ must be constant along $[0,1]\times\{0\}$, $\{1\}\times [0,1]$, and $[0,1]\times\{1\}$, and $(c_0)_G=(c_G)_0=\gamma_G$, it will be a null-homotopy from $\gamma$ to $\gamma(0)$ if it exists, and the image of $c$ cannot leave the image of $c_0=\gamma$ because the image of $c_G$ is contained in the image of $\gamma_G$, so $c$ is necessarily a thin null-homotopy.

Emulating the proof of Theorem 6.1 in Chapter 3 of \cite{Sharpe1997}, we consider the projections $\pi_\mathscr{G}:[0,1]^2\times\mathscr{G}\to\mathscr{G}$ and $\pi_{[0,1]^2}:[0,1]^2\times\mathscr{G}\to[0,1]^2$. Setting $\zeta:=(c_G\circ\pi_{[0,1]^2})^*\MC{G}-\pi_\mathscr{G}^*\omega$, we see that $\pi_{[0,1]^2\,*}$ gives a linear isomorphism from $\ker(\zeta)$ to the tangent spaces of $[0,1]^2$, so that $\ker(\zeta)$ is a two-dimensional distribution. Moreover, for $\Omega:=\mathrm{d}\omega+\frac{1}{2}[\omega,\omega]$, \begin{align*}\mathrm{d}\zeta & =(c_G\circ\pi_{[0,1]^2})^*\mathrm{d}\MC{G}-\pi_\mathscr{G}^*\mathrm{d}\omega \\ & =-\frac{1}{2}(c_G\circ\pi_{[0,1]^2})^*[\MC{G},\MC{G}]+\frac{1}{2}\pi_\mathscr{G}^*[\omega,\omega]-\pi_\mathscr{G}^*\Omega \\ & =-\frac{1}{2}[\zeta+\pi_\mathscr{G}^*\omega,\zeta+\pi_\mathscr{G}^*\omega]+\frac{1}{2}\pi_\mathscr{G}^*[\omega,\omega]-\pi_\mathscr{G}^*\Omega \\ & =-\frac{1}{2}([\zeta,\zeta]+[\pi_\mathscr{G}^*\omega,\zeta]+[\zeta,\pi_\mathscr{G}^*\omega])-\pi_\mathscr{G}^*\Omega.\end{align*} Since $c_G$ has rank at most one, $(\pi_\mathscr{G})_*\ker(\zeta)$ is at most one-dimensional, so $\pi_\mathscr{G}^*\Omega$ must vanish on $\ker(\zeta)$. The rest of the expression for $\mathrm{d}\zeta$ above is a sum of terms formed by bracketing with $\zeta$, so it must vanish on $\ker(\zeta)$ as well. Thus, $\ker(\zeta)$ is integrable. If $N$ is a leaf of $\ker(\zeta)$ through $((s,t),\mathscr{g})\in[0,1]^2\times\mathscr{G}$, then $\pi_{[0,1]^2\,*}$ gives a linear isomorphism from the tangent space of $N$ at $((s,t),\mathscr{g})$ to the tangent space of $[0,1]^2$ at $(s,t)$, so there is a neighborhood $V$ of $(s,t)$ on which we get a smooth inverse $f:V\to N$ to $\pi_{[0,1]^2}$ such that $f(s,t)=((s,t),\mathscr{g})$. Thus, \begin{align*}0=f^*\zeta & =f^*((c_G\circ\pi_{[0,1]^2})^*\MC{G}-\pi_\mathscr{G}^*\omega) \\ & =f^*\pi_{[0,1]^2}^*(c_G^*\MC{G})-f^*\pi_\mathscr{G}^*\omega \\ & =(\pi_{[0,1]^2}\circ f)^*(c_G^*\MC{G})-(\pi_\mathscr{G}\circ f)^*\omega \\ & =c_G^*\MC{G}-(\pi_\mathscr{G}\circ f)^*\omega,\end{align*} so $c|_V:=\pi_\mathscr{G}\circ f:V\subseteq[0,1]^2\to\mathscr{G}$ satisfies $(c|_V)^*\omega=c_G^*\MC{G}$.

For each $s\in[0,1]$, let $c_s:[0,1]\to\mathscr{G}$ be the unique path with $c_s(0)=\gamma(0)$ and $(c_s)_G=(c_G)_s$; since $c_0=\gamma$ is well-defined and each $c_s$ must stay within the image of $c_0$, these paths are well-defined as well. If there is a map $c:[0,1]^2\to\mathscr{G}$ with $c^*\omega=c_G^*\MC{G}$ and $c_0(0)=\gamma(0)$, then it must satisfy $c|_{\{s\}\times[0,1]}=c_s$ for each $s$, so we just need to verify that $c:(s,t)\mapsto c_s(t)$ works as our map. To do this, choose an open neighborhood $V_{(s,t)}$ for each $(s,t)\in[0,1]^2$ such that we get a map $c|_{V_{(s,t)}}$ as above with $(c|_{V_{(s,t)}})(s,t):=c_s(t)$ and $(c|_{V_{(s,t)}})^*\omega=c_G^*\MC{G}$. This lets us cover each $\{s\}\times[0,1]$ with open sets on which a map satisfying the desired conditions exists, and these maps $c|_{V_{(s,t)}}$ would necessarily agree on overlaps along $\{s\}\times[0,1]$ because, by definition, $c_s$ is the unique path with $c_s(0)=\gamma(0)$ and $(c_s)_G=(c_G)_s$. Thus, for each $s\in[0,1]$, setting $V_s:=\bigcup_{t\in[0,1]}V_{(s,t)}$, we get a map $c|_{V_s}$ on an open neighborhood of $\{s\}\times[0,1]$ such that $(c|_{V_s})^*\omega=c_G^*\MC{G}$ and $(c|_{V_s})(s,0)=\gamma(0)$. From here, we can glue the maps $c|_{V_s}$ together along their overlaps to get $c$, since the $c|_{V_s}$ must necessarily coincide on $[0,1]\times\{0\}$ because they are constant along this interval. Thus, we get a map $c:[0,1]^2\to\mathscr{G}$ satisfying $c^*\omega=c_G^*\MC{G}$ and $c_0(0)=\gamma(0)$, which must be a thin null-homotopy by the argument above.\mbox{\qedhere}\end{proof}

With these lemmas in hand, let us finally explain what the following theorem is meant to tell us. Recall that, in Definition \ref{sprawldef}, we refer to the Cartan geometry $(\mathscr{F},\sigma^*\omega)$ as ``the sprawl of $(q_{{}_H}^{-1}(U),\omega)$ generated by $\alpha$ from $\mathscr{e}$''. Ostensibly, however, $(q_{{}_H}^{-1}(U),\omega)$, $\alpha$, and $\mathscr{e}$ are not enough to determine the geometric structure of the sprawl: the Cartan connection is given explicitly in terms of the sprawl map $\sigma$ for $(\mathscr{G},\omega)$, and the topology of $\mathscr{F}$ is determined by particular null-homotopies in $M$. We would like to show that, in truth, the sprawl really is uniquely determined by $(q_{{}_H}^{-1}(U),\omega)$, the distinguished element $\mathscr{e}\in q_{{}_H}^{-1}(U)$, and the behavior of $\alpha$ on them.

To do this, suppose $(\mathscr{Q},\upsilon)$ is another Cartan geometry of type $(G,H)$ that happens to have an open set geometrically identical to $(q_{{}_H}^{-1}(U),\omega)$, meaning that there is a geometric embedding $\psi:(q_{{}_H}^{-1}(U),\omega)\hookrightarrow (\mathscr{Q},\upsilon)$. Furthermore, suppose it has an automorphism $\varphi\in\Aut(\mathscr{Q},\upsilon)$ that behaves exactly as $\alpha$ does on the distinguished element $\mathscr{e}$ under the identification given by the geometric embedding $\psi$; in other words, $\varphi(\psi(\mathscr{e}))=\psi(\alpha(\mathscr{e}))$. Then, if the sprawl truly is uniquely determined by $(q_{{}_H}^{-1}(U),\omega)$, $\alpha$, and $\mathscr{e}$, then the sprawl of $(\psi(q_{{}_H}^{-1}(U)),\upsilon)$ generated by $\varphi$ from $\psi(\mathscr{e})$ should be geometrically isomorphic to $(\mathscr{F},\sigma^*\omega)$ in some natural way. The following theorem shows exactly this; indeed, it shows that the embedding $\psi$ uniquely extends to the new sprawl map for $(\mathscr{Q},\upsilon)$ from $(\mathscr{F},\sigma^*\omega)$.

\begin{theorem}\label{universalproperty} Let $(\mathscr{Q},\upsilon)$ be another Cartan geometry of type $(G,H)$, with an automorphism $\varphi\in\Aut(\mathscr{Q},\upsilon)$. If \[\psi|_{q_{{}_H}^{-1}(U)}:(\tilde{\alpha}^0(q_{{}_H}^{-1}(U)),\sigma^*\omega)\cong(q_{{}_H}^{-1}(U),\omega)\hookrightarrow (\mathscr{Q},\upsilon)\] is a geometric embedding such that $\varphi((\psi|_{q_{{}_H}^{-1}(U)})(\mathscr{e}))=(\psi|_{q_{{}_H}^{-1}(U)})(\alpha(\mathscr{e}))$, then $\psi|_{q_{{}_H}^{-1}(U)}$ has a unique extension to a geometric map \[\psi:(\mathscr{F},\sigma^*\omega)\to(\mathscr{Q},\upsilon)\] from the sprawl of $(q_{{}_H}^{-1}(U),\omega)$ generated by $\alpha$ from $\mathscr{e}$ into $(\mathscr{Q},\upsilon)$ such that $\psi\circ\tilde{\alpha}=\varphi\circ\psi$.\end{theorem}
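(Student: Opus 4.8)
The plan is to let the intertwining relation dictate $\psi$ and then to verify that the resulting formula descends to the quotient. Since the copies $\tilde{\alpha}^i(q_{{}_H}^{-1}(U))$ cover $\mathscr{F}$ and $\tilde{\alpha}^i(\mathscr{g})=\tilde{\alpha}^i(\tilde{\alpha}^0(\mathscr{g}))$, any extension with $\psi\circ\tilde{\alpha}=\varphi\circ\psi$ must satisfy $\psi(\tilde{\alpha}^i(\mathscr{g}))=\varphi^i(\psi_0(\mathscr{g}))$, where I write $\psi_0:=\psi|_{q_{{}_H}^{-1}(U)}$; this forces uniqueness and tells us how to define $\psi$. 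The whole theorem then reduces to showing that $\psi(\tilde{\alpha}^i(\mathscr{g})):=\varphi^i(\psi_0(\mathscr{g}))$ respects sprawl-equivalence. Once it does, smoothness is automatic, since on each copy $\psi=\varphi^i\circ\psi_0\circ(\sigma|_{\tilde{\alpha}^i(q_{{}_H}^{-1}(U))})^{-1}$; $H$-equivariance follows from that of $\psi_0$ and $\varphi$; and $\psi^*\upsilon=\sigma^*\omega$ follows because on the $i$-th copy, parametrized by $\iota_i:\mathscr{g}\mapsto\tilde{\alpha}^i(\mathscr{g})$, both $\iota_i^*\sigma^*\omega=(\alpha^i)^*\omega=\omega$ and $\iota_i^*\psi^*\upsilon=\psi_0^*(\varphi^i)^*\upsilon=\psi_0^*\upsilon=\omega$, using $\varphi^*\upsilon=\upsilon$, $\alpha^*\omega=\omega$, and $\psi_0^*\upsilon=\omega$. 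The relation $\psi\circ\tilde{\alpha}=\varphi\circ\psi$ then holds by construction.

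The first ingredient for well-definedness is a rigidity step governing the gluing between adjacent copies. Let $C$ be the connected component of $U\cap\alpha^{-1}(U)$ containing $q_{{}_H}(\mathscr{e})$, which lies in both $U$ and $\alpha^{-1}(U)$ by our choice of $U$. I would show $\psi_0\circ\alpha=\varphi\circ\psi_0$ on all of $q_{{}_H}^{-1}(C)$. Both sides are $H$-equivariant geometric maps into $(\mathscr{Q},\upsilon)$, and they agree at $\mathscr{e}$ precisely because $\varphi(\psi_0(\mathscr{e}))=\psi_0(\alpha(\mathscr{e}))$. Since a geometric map intertwines the constant vector fields $\omega^{-1}(X)$ and $\upsilon^{-1}(X)$, two such maps agreeing at a point agree along all $\omega$-flows issuing from it; as $C$ is connected, lifting paths in $C$ and using $H$-equivariance to move across fibres propagates the agreement throughout $q_{{}_H}^{-1}(C)$. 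Unwinding the incrementation condition, every transition point, after translating by the appropriate power of $\alpha$, lies in $C$, so this identity is exactly what makes the copy-wise formula agree at the points identified under the na\"{i}ve gluing; in particular, for any path $\hat{\gamma}$ in $\mathscr{G}$ whose projection is incremented, the lift $\tilde{\gamma}$ from Lemma \ref{incrlift} satisfies that $\psi\circ\tilde{\gamma}$ is a genuinely continuous path.

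The second, and main, ingredient uses thin homotopies to close up the gluing around incremented loops. Suppose $\tilde{\alpha}^{i_1}(\mathscr{g}_1)\sim\tilde{\alpha}^{i_2}(\mathscr{g}_2)$, so that $\mathscr{g}_*:=\alpha^{i_1}(\mathscr{g}_1)=\alpha^{i_2}(\mathscr{g}_2)$ and there is a thinly null-homotopic loop $\gamma$ in $M$ based at $q_{{}_H}(\mathscr{g}_*)$ and incremented from $i_1$ to $i_2$. I would lift $\gamma$ to a thinly null-homotopic loop $\hat{\gamma}$ in $\mathscr{G}$ based at $\mathscr{g}_*$, then apply Lemma \ref{incrlift} to obtain $\tilde{\gamma}$ in $\mathscr{F}$ with $\sigma\circ\tilde{\gamma}=\hat{\gamma}$. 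By the explicit form of the lift and the first ingredient, $\delta:=\psi\circ\tilde{\gamma}$ is a continuous path in $\mathscr{Q}$ running from $\varphi^{i_1}(\psi_0(\mathscr{g}_1))$ to $\varphi^{i_2}(\psi_0(\mathscr{g}_2))$; here the fact that $\hat{\gamma}$ is a loop removes the stray isotropy factor that the terminal label would otherwise introduce. Because each segment of $\delta$ is the image of the corresponding segment of $\hat{\gamma}$ under a geometric map (a power of $\varphi$ composed with $\psi_0$ and a power of $\alpha$), $\delta$ and $\hat{\gamma}$ have the same development; Lemma \ref{backtracking} applied to $\mathscr{G}$ shows $\hat{\gamma}_G$ is a thinly null-homotopic loop, and applying it to $\mathscr{Q}$ shows $\delta$ is as well, so $\delta(0)=\delta(1)$, i.e. $\varphi^{i_1}(\psi_0(\mathscr{g}_1))=\varphi^{i_2}(\psi_0(\mathscr{g}_2))$, which is exactly well-definedness.

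The hard part will be the opening move of the last paragraph: producing a thinly null-homotopic lift $\hat{\gamma}$ of the thin base loop $\gamma$ through the prescribed point $\mathscr{g}_*$. A generic lift of a thin loop is neither thin nor closed, since the backtracking can be undone in the fibre direction, so the lift must be chosen to respect the thin structure. I expect to handle this by lifting a chosen thin null-homotopy $c$ of $\gamma$ -- which has rank at most one and therefore factors through its one-dimensional image -- to a rank-at-most-one homotopy $\hat{c}$ in $\mathscr{G}$ with $\hat{c}_0(0)=\mathscr{g}_*$, using that rank-one maps lift through the projection $q_{{}_H}$; then $\hat{\gamma}:=\hat{c}_0$ is thinly null-homotopic and closed, with $q_{{}_H}\circ\hat{\gamma}=\gamma$ and hence the same incrementation. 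Everything else is bookkeeping, but this lifting is where the geometric content of the thinness hypothesis is genuinely spent, so it is the step I would write out most carefully.
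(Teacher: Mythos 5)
Your proposal follows essentially the same route as the paper's proof: the intertwining relation forces the formula $\psi(\tilde{\alpha}^i(\mathscr{g}))=\varphi^i\bigl((\psi|_{q_{{}_H}^{-1}(U)})(\mathscr{g})\bigr)$, which settles uniqueness and the geometric-map property; well-definedness is reduced to rigidity of geometric maps on connected sets (your ``first ingredient'' is exactly the paper's observation that $\psi|_{\tilde{\alpha}^k(q_{{}_H}^{-1}(U))}$ and $\psi|_{\tilde{\alpha}^{k+1}(q_{{}_H}^{-1}(U))}$ agree over the relevant connected component), combined with Lemma \ref{incrlift} and Lemma \ref{backtracking} used exactly as the paper uses them: lift the thin base loop into $\mathscr{G}$, lift again into $\mathscr{F}$, transplant into $\mathscr{Q}$ segment by segment, and compare developments to force the transplanted path to close up.

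The only place you diverge is the step you yourself flag as the crux: producing a lift of the thin base loop that is a thinly null-homotopic \emph{loop} through the prescribed point of $\mathscr{G}$. Your proposed mechanism --- ``rank-one maps lift through the projection $q_{{}_H}$'' --- is not a valid principle as stated: a map of rank at most one from a non-simply-connected one-dimensional space need not lift through a bundle projection (the identity map of a circle does not lift through a connected double cover), and a rank-one map need not factor through a manifold in any useful sense. What actually makes the lift exist is not the rank of the thin null-homotopy $c$ but the contractibility of its \emph{domain} $[0,1]^2$: the pullback $c^*\mathscr{G}$ is trivial, and since $c$ collapses three sides of the square to the basepoint, one can prescribe the lift to be constant there, producing a null-homotopy of a closed lift; the genuinely delicate residue is then arranging that the lifted homotopy is again \emph{thin}, which is where the image/rank structure of $c$ must be spent. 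You should know, however, that the paper is equally glib at this exact point: its one-line justification, ``the image of a null-homotopy is contractible,'' is false as stated (a loop that traverses an embedded circle and then retraces it is thinly null-homotopic, yet the image of any thin null-homotopy of it is the whole circle). So your instinct about where the real geometric content lies is correct, and your proof matches the paper's everywhere else; but the lifting step cannot be discharged by the general fact you cite and would need an argument of the above type written out in full.
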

\begin{proof}If the desired extension to the sprawl $\mathscr{F}$ exists, then it must be of the form $\psi:\tilde{\alpha}^i(\mathscr{g})\mapsto\varphi^i((\psi|_{q_{{}_H}^{-1}(U)})(\mathscr{g}))$, so uniqueness is immediate and \begin{align*}(\psi^*\upsilon)_{\tilde{\alpha}^i(\mathscr{g})} & =\psi^*(\upsilon_{\varphi^i(\psi(\mathscr{g}))})=\psi^*(\varphi^{-i})^*(\upsilon_{\psi(\mathscr{g})})=(\varphi^{-i}\circ\psi)^*(\upsilon_{\psi(\mathscr{g})}) \\ & =(\psi\circ\tilde{\alpha}^{-i})^*(\upsilon_{\psi(\mathscr{g})})=(\tilde{\alpha}^{-i})^*\psi^*(\upsilon_{\psi(\mathscr{g})})=(\tilde{\alpha}^{-i})^*(\sigma^*\omega_\mathscr{g}) \\ & =(\sigma^*\omega)_{\tilde{\alpha}^i(\mathscr{g})},\end{align*} hence $\psi$ must be a geometric map as well. Thus, we just need to show that an extension of this form is well-defined.

To this end, suppose $\tilde{\alpha}^{i_1}(\mathscr{g}_1)\sim\tilde{\alpha}^{i_2}(\mathscr{g}_2)$, so that $\alpha^{i_1}(\mathscr{g}_1)=\alpha^{i_2}(\mathscr{g}_2)$ and there exists a thinly null-homotopic loop $q_{{}_H}(\gamma):[0,1]\to M$ based at the point $\alpha^{i_1}(q_{{}_H}(\mathscr{g}_1))=\alpha^{i_2}(q_{{}_H}(\mathscr{g}_2))$ incremented from $i_1$ to $i_2$. Since the image of a null-homotopy is contractible, we can lift $q_{{}_H}(\gamma)$ to a thinly null-homotopic loop $\gamma:[0,1]\to\mathscr{G}$ based at $\alpha^{i_1}(\mathscr{g}_1)=\alpha^{i_2}(\mathscr{g}_2)$, and by Lemma \ref{incrlift}, we can further lift to a path $\tilde{\gamma}:[0,1]\to\mathscr{F}$ starting at $\tilde{\alpha}^{i_1}(\mathscr{g}_1)$. Since $\gamma_G=\tilde{\gamma}_G$, $\tilde{\gamma}$ is again a thinly null-homotopic loop by Lemma \ref{backtracking}. Our strategy to show that \[\psi(\tilde{\alpha}^{i_1}(\mathscr{g}_1))=\psi(\tilde{\gamma}(0))=\psi(\tilde{\gamma}(1))=\psi(\tilde{\alpha}^{i_2}(\mathscr{g}_2))\] is to construct a well-defined path $\beta:[0,1]\to\mathscr{Q}$ that always agrees with what the composite $\psi\circ\tilde{\gamma}$ is if $\psi$ is well-defined; because we will have $\beta_G=\tilde{\gamma}_G$, $\beta$ will be a thinly null-homotopic loop by Lemma \ref{backtracking}, hence \[\psi(\tilde{\gamma}(0))=\beta(0)=\beta(1)=\psi(\tilde{\gamma}(1)).\]

We construct the path $\beta:[0,1]\to\mathscr{Q}$ along the lines of the proof of Lemma \ref{incrlift}. Let the incrementation of $q_{{}_H}(\gamma)$ be given by the partition $0=t_0<\cdots<t_\ell=1$ and labels $i_1=k_0,\dots,k_{\ell-1}=i_2\in\mathbb{Z}$. To start, this means that $\tilde{\gamma}([0,t_1])\subseteq\tilde{\alpha}^{i_1}(q_{{}_H}^{-1}(U))$, since $\sigma(\tilde{\gamma})=\gamma$ by definition. Whenever we restrict $\psi$ to a given $\tilde{\alpha}^k(q_{{}_H}^{-1}(U))$, we get a well-defined geometric embedding $\psi|_{\tilde{\alpha}^k(q_{{}_H}^{-1}(U))}$, which by definition is given by \[\psi|_{\tilde{\alpha}^k(q_{{}_H}^{-1}(U))}:=\varphi^k\circ(\psi|_{q_{{}_H}^{-1}(U)})\circ\tilde{\alpha}^{-k}|_{\tilde{\alpha}^k(q_{{}_H}^{-1}(U))}.\] Therefore, it is valid to define $\beta|_{[0,t_1]}:=\psi|_{\tilde{\alpha}^{i_1}(q_{{}_H}^{-1}(U))}\circ\tilde{\gamma}|_{[0,t_1]}$.

At this point, we make a key observation: because the elements $\tilde{\alpha}(\mathscr{e})$ and $\alpha(\mathscr{e})$ are identified in $(\tilde{\alpha}^0(q_{{}_H}^{-1}(U)),\sigma^*\omega)\cong(q_{{}_H}^{-1}(U),\omega)$ and $(\psi|_{q_{{}_H}^{-1}(U)})(\tilde{\alpha}(\mathscr{e}))=\varphi((\psi|_{q_{{}_H}^{-1}(U)})(\mathscr{e}))$, the geometric embeddings $\psi|_{q_{{}_H}^{-1}(U)}$ and \[\psi|_{\tilde{\alpha}(q_{{}_H}^{-1}(U))}=\varphi\circ(\psi|_{q_{{}_H}^{-1}(U)})\circ\tilde{\alpha}^{-1}|_{\tilde{\alpha}(q_{{}_H}^{-1}(U))}\] must coincide over the connected component of $q_{{}_H}(\tilde{\alpha}(\mathscr{e}))=q_{{}_H}(\alpha(\mathscr{e}))$ in the intersection $U\cap\tilde{\alpha}(U)$ in $q_{{}_H}(\mathscr{F})$, since \[(\psi|_{\tilde{\alpha}(q_{{}_H}^{-1}(U))})(\tilde{\alpha}(\mathscr{e}))=\varphi((\psi|_{q_{{}_H}^{-1}(U)})(\mathscr{e}))=(\psi|_{q_{{}_H}^{-1}(U)})(\tilde{\alpha}(\mathscr{e})).\] Using iterates of $\tilde{\alpha}$ and $\varphi$ to move to the other copies of $q_{{}_H}^{-1}(U)$, we then see that, for each $k$, $\psi|_{\tilde{\alpha}^k(q_{{}_H}^{-1}(U))}$ and $\psi|_{\tilde{\alpha}^{k+1}(q_{{}_H}^{-1}(U))}$ must coincide over the connected component of $q_{{}_H}(\tilde{\alpha}^{k+1}(\mathscr{e}))$ in $\tilde{\alpha}^k(U)\cap\tilde{\alpha}^{k+1}(U)$. By definition, the incrementation of $q_{{}_H}(\gamma)$ tells us that $q_{{}_H}(\gamma)(t_1)$ lies in the connected component of the point $\alpha^{\max(k_0,k_1)}(q_{{}_H}(\mathscr{e}))$ in $\alpha^{k_0}(U)\cap\alpha^{k_1}(U)$, so $\tilde{\gamma}(t_1)$ must lie over the connected component of the point $\tilde{\alpha}^{\max(k_0,k_1)}(q_{{}_H}(\mathscr{e}))$ in $\tilde{\alpha}^{k_0}(U)\cap\tilde{\alpha}^{k_1}(U)$. In particular, $\psi|_{\tilde{\alpha}^{k_0}(q_{{}_H}^{-1}(U))}$ and $\psi|_{\tilde{\alpha}^{k_1}(q_{{}_H}^{-1}(U))}$ must coincide on $\tilde{\gamma}(t_1)$ because $|k_0-k_1|=1$, so we can extend $\beta$ to $[0,t_2]$ by defining $\beta|_{[t_1,t_2]}:=\psi|_{\tilde{\alpha}^{k_1}(q_{{}_H}^{-1}(U))}\circ\tilde{\gamma}|_{[t_1,t_2]}$.

By iterating this procedure, defining \[\beta|_{[t_j,t_{j+1}]}:=\psi|_{\tilde{\alpha}^{k_j}(q_{{}_H}^{-1}(U))}\circ\tilde{\gamma}|_{[t_j,t_{j+1}]}\] for each $j$, we get a well-defined path $\beta$ that must be of the form $\psi\circ\tilde{\gamma}$ if the extension $\psi$ is well-defined. In particular, $\beta$ is a path from $\beta(0)=\varphi^{i_1}((\psi|_{q_{{}_H}^{-1}(U)})(\mathscr{g}_1))$ to $\beta(1)=\varphi^{i_2}((\psi|_{q_{{}_H}^{-1}(U)})(\mathscr{g}_2))$ with $\beta_G=\tilde{\gamma}_G$, so it must be a thinly null-homotopic loop based at \[\psi(\tilde{\alpha}^{i_1}(\mathscr{g}_1))=\beta(0)=\beta(1)=\psi(\tilde{\alpha}^{i_2}(\mathscr{g}_2)).\mbox{\qedhere}\]\end{proof}

\subsection{A remark on the intricacy of the sprawl definition}
To be blunt, the definition of the sprawl is quite involved. We probably should not be too surprised by this: the construction basically encodes all of the information we can get from the local behavior of an automorphism near a distinguished element, so it needs to be at least complicated enough to account for all instances of automorphisms admitting the same local behavior. Still, it can be tempting to imagine that the definition of sprawls can somehow be radically simplified to something that is always easy to implement.

Lamentably, if such a simplification exists, then it has evaded us even after considerable effort spent trying to find it. The following modification of the example from the beginning of the section is useful for understanding why the sprawl definition must be so involved, even for ostensibly straightforward situations.

Let $(\mathscr{G},\omega)$ be the Cartan geometry of type $(\Iso(2),\Orth(2))$ corresponding to the standard Riemannian structure on the unit 2-sphere $M=\mathbb{S}^2$. We choose $\alpha\in\Aut(\mathscr{G},\omega)\simeq\Orth(3)$ to be a nontrivial rotation, and $\mathscr{e}\in\mathscr{G}$ to be an element lying over one of the two fixed points of $\alpha$; we can think of $q_{{}_{\Orth(2)}}(\mathscr{e})$ as the ``north pole'' and the other fixed point $x$ as the ``south pole''. Define $U=D\cup D'\cup V$ as in Figure \ref{advrotexample}, where $D$ is a small $\alpha$-invariant disk around $q_{{}_{\Orth(2)}}(\mathscr{e})$, $D'$ is a small $\alpha$-invariant disk around $x$, and $V$ is the interior of a spherical lune between the two fixed points such that $\alpha(V)\cap V=\emptyset$.

\begin{figure}
\centering\includegraphics[width=0.4\textwidth]{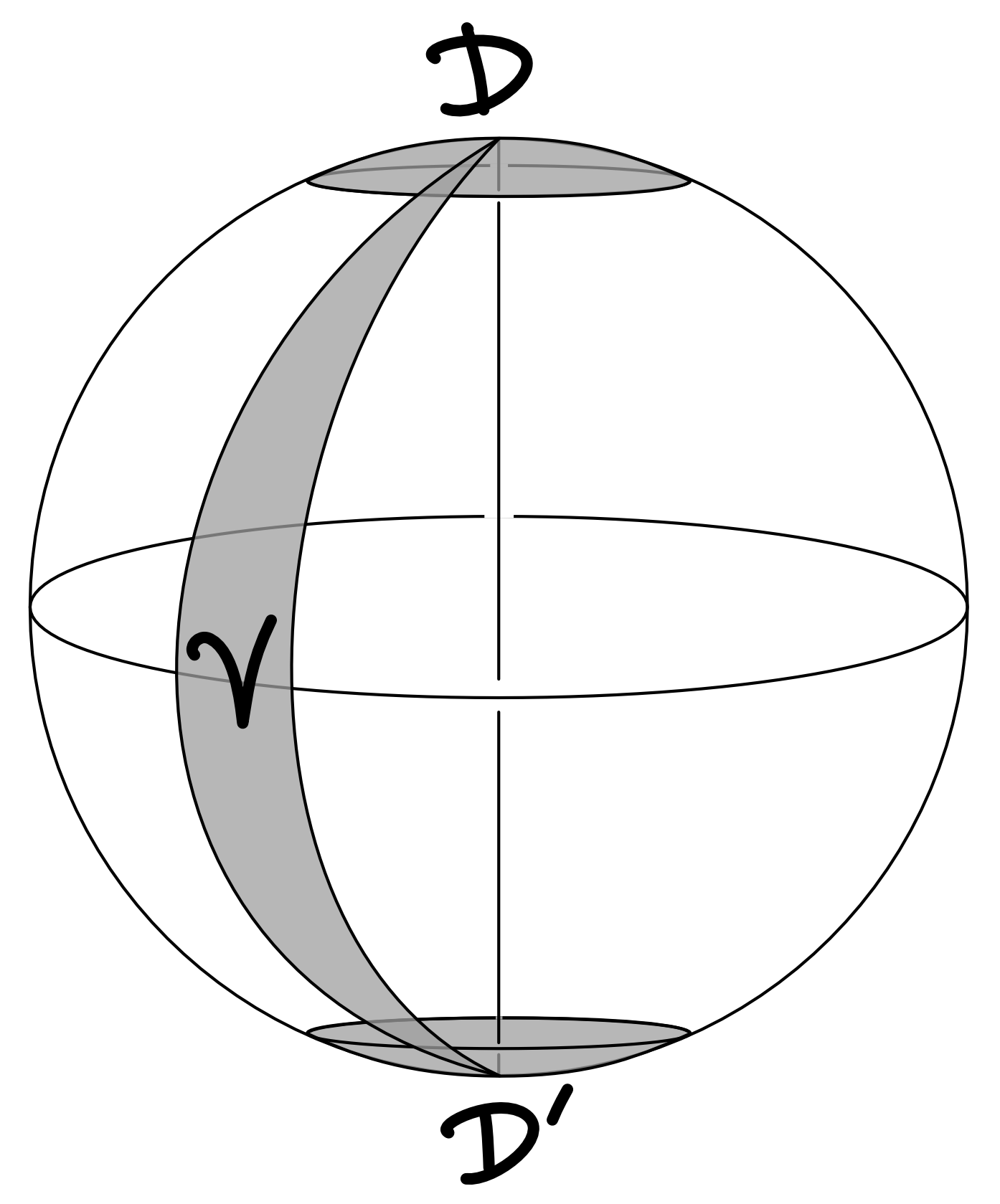}
\caption{The region $U=D\cup D'\cup V$ given by the union of two open disks $D$ and $D'$, one around each fixed point of a rotation on the 2-sphere, with the interior $V$ of a spherical lune connecting them}
\label{advrotexample}
\end{figure}

To compute the sprawl $(\mathscr{F},\sigma^*\omega)$ of $(q_{{}_{\Orth(2)}}^{-1}(U),\omega)$ generated by $\alpha$ from $\mathscr{e}$, note that, whenever $\alpha^{i_1}(V)\cap\alpha^{i_2}(V)\neq\emptyset$, each $y\in\alpha^{i_1}(U)\cap\alpha^{i_2}(U)$ has a path $\delta:[0,1]\to\alpha^{i_1}(U)\cap\alpha^{i_2}(U)$ starting at $\delta(0)=y$ and ending at $\delta(1)=q_{{}_{\Orth(2)}}(\mathscr{e})$; since $q_{{}_{\Orth(2)}}(\mathscr{e})$ happens to be a fixed point for $\alpha$, the concatenation $\delta\star\bar{\delta}$ is therefore a thinly null-homotopic loop that can be incremented from $i_1$ to $i_2$, so $\tilde{\alpha}^{i_1}(\alpha^{-i_1}(y))$ is sprawl-equivalent to $\tilde{\alpha}^{i_2}(\alpha^{-i_2}(y))$.

Thus, if $\alpha^k(V)=V$ for some $k>1$ and $\alpha^i(V)\cap V=\emptyset$ for each $0<i<k$, then the base manifold $q_{{}_{\Orth(2)}}(\mathscr{F})$ will have exactly $k$ disjoint copies of $D'$, given by $\tilde{\alpha}^0(D'),\dots,\tilde{\alpha}^{k-1}(D')$. In this case, $q_{{}_{\Orth(2)}}(\mathscr{F})$ is topologically reminiscent of a starfish with disks attached to the ends of each of its appendages.

If $\alpha^k(V)\cap V\neq\emptyset$ for some $k>1$, but $\alpha^k(V)\neq V$, then things get more complicated. Indeed, the above tells us that, for each $n$, $\tilde{\alpha}^{nk}(D')$ will coincide with $\tilde{\alpha}^0(D')$, but every $\alpha^i(V)$ will eventually have nontrivial intersection with $\alpha^{nk}(V)$ for some $n$, so we must also have $\tilde{\alpha}^i(D')=\tilde{\alpha}^{nk}(D')=\tilde{\alpha}^0(D')$. In other words, $\tilde{\alpha}^0(D')$ is also $\tilde{\alpha}$-invariant, and it is not terribly difficult to see that $(\mathscr{F},\sigma^*\omega)$ must be isomorphic to $(\mathscr{G},\omega)$ in this case. Note, in particular, that $\tilde{\alpha}(x)\sim x$ despite the fact that $x$ is not in the same connected component as $q_{{}_{\Orth(2)}}(\mathscr{e})$ in the intersection $\alpha(U)\cap U$, and we do not inherently even know how many intermediate steps it might take to get the relevant incrementation from $0$ to $1$.


\section{Applications for parabolic geometries}\label{applications}
Because sprawls encapsulate the local behavior of automorphisms via the ``universal property'' described in Theorem \ref{universalproperty}, the construction is remarkably well-suited to describing---and placing strong restrictions on---Cartan geometries with automorphisms admitting particular local behaviors.

One especially fruitful strategy for using the sprawl, which we shall demonstrate in the examples below, is to relate the local behavior and structure to that of the model geometry. For example, if we happen to know that the curvature vanishes over some connected open subset containing a point and its image under a given automorphism, then by Theorem \ref{universalproperty}, the sprawl generated by that automorphism will, if we make the flat neighborhood sufficiently small, coincide with a sprawl generated by an automorphism of the model geometry. Alternatively, since the topology of the sprawl $\mathscr{F}=\mathscr{F}(q_{{}_H}^{-1}(U),\alpha,\mathscr{e})$ does not depend on the underlying choice of Cartan connection $\omega$ on $q_{{}_H}^{-1}(U)\subseteq\mathscr{G}$, if a given \textit{local} automorphism of a Cartan geometry coincides with an automorphism of the model topologically, then we might be able to construct the sprawl from the Klein geometry and then modify the connection on that sprawl to get a non-flat Cartan geometry with a \textit{global} automorphism extending the local one with which we started. Since it is generally not too difficult to compute the sprawl generated by an automorphism inside the Klein geometry, this leads us to several easy results describing Cartan geometries admitting automorphisms with particular local behavior.

For our present purposes, we shall specifically focus on parabolic\linebreak geometries admitting automorphisms with higher-order fixed points. A model geometry $(G,P)$ is called \emph{parabolic} when $G$ is a semisimple Lie group and $P$ is a parabolic subgroup, and a \emph{parabolic geometry} is a Cartan geometry $(\mathscr{G},\omega)$ of parabolic type $(G,P)$ satisfying certain mild curvature restrictions called \emph{regularity} and \emph{normality}; see 3.1 of \cite{CapSlovakPG1} for details. Denoting by $P_+<P$ the nilradical of the parabolic subgroup $P$, an automorphism $\alpha$ of a parabolic geometry $(\mathscr{G},\omega)$ of type $(G,P)$ is said to admit a \emph{higher-order fixed point}\footnote{This terminology comes from situations where $P_+$ is abelian, as is the case for the model geometries corresponding to projective and conformal structures. In such cases, the condition amounts to saying that the automorphism $\alpha$ fixes a point $q_{{}_P}(\mathscr{e})$ of the base manifold $M$ such that that the derivative $\alpha_{*q_{{}_P}(\mathscr{e})}$ of $\alpha$ at $q_{{}_P}(\mathscr{e})$ is equal to the identity transformation on the tangent space $T_{q_{{}_P}(\mathscr{e})}M$.} when $\alpha(\mathscr{e})\in\mathscr{e}P_+$ for some $\mathscr{e}\in\mathscr{G}$.

In many---but not all---cases, the curvature of a parabolic geometry will vanish in the neighborhood of a higher-order fixed point. A general\linebreak technique for showing this was outlined in \cite{CapMelnick2013}, though the result goes all the way back to Lemma 5.6 of \cite{NaganoOchiai} in the projective case. As such, automorphisms with higher-order fixed points provide a plethora of examples with which to apply our strategy of passing to the model geometry to compute the sprawl. Moreover, since higher-order fixed points often place such strong local restrictions on the curvature of the geometry, examples that are not globally flat where higher-order fixed points exist would be especially interesting. Prior to this, there have been constructions with incomplete flows, such as in Section 6.2 of \cite{Frances2007} and Example 4.3 in \cite{KruglikovThe2018}---as well as a result in low regularity in \cite{CapMelnick2021}, which does not have a corresponding Cartan geometry---but no examples of \textit{global} automorphisms with higher-order fixed points in a non-flat Cartan geometry. We will describe how to construct such global examples using sprawls.

\subsection{Non-flat projective structures with higher-order fixed points}
Consider the parabolic model geometry $(\PGL_{m+1}\mathbb{R},P)$ corresponding to $m$-dimensional real projective geometry, where $P$ is the parabolic subgroup \[P:=\left\{\begin{pmatrix}r & \beta \\ 0 & R\end{pmatrix}:r\in\mathbb{R}^\times, \beta^\top\in\mathbb{R}^m, R\in\GLin_m\mathbb{R}\right\},\] which is the stabilizer for a point of $\mathbb{RP}^m$ under the usual action of $\PGL_{m+1}\mathbb{R}$. Cartan geometries of type $(\PGL_{m+1}\mathbb{R},P)$ are well-known to correspond to projective structures on $m$-manifolds under the mild curvature restrictions alluded to above; see either Section 4.1.5 of \cite{CapSlovakPG1} or Chapter 8 of \cite{Sharpe1997}.

Within $P$, we have its nilradical \[P_+:=\left\{\begin{pmatrix}1 & \beta \\ 0 & \mathds{1}\end{pmatrix}:\beta^\top\in\mathbb{R}^m\right\},\] and we want to look at (nontrivial) automorphisms $\alpha\in\Aut(\mathscr{G},\omega)$ with higher-order fixed points, so that $\alpha(\mathscr{e})\in\mathscr{e}P_+$ for some $\mathscr{e}\in\mathscr{G}$. By Lemma 5.6 of \cite{NaganoOchiai}, whenever a parabolic geometry $(\mathscr{G},\omega)$ of type $(\PGL_{m+1}\mathbb{R},P)$ admits such an automorphism $\alpha$, the curvature must vanish over a neighborhood of the higher-order fixed point. This tells us that, for some open neighborhood $U$ of $q_{{}_P}(e)$ in $\mathbb{RP}^m$, which we can choose to make properly convex by restricting if necessary, there is a geometric embedding \[\psi:(q_{{}_P}^{-1}(U),\MC{\PGL_{m+1}\mathbb{R}})\hookrightarrow(\mathscr{G},\omega)\] with $\psi(e)=\mathscr{e}$. Denoting by $a\in P_+$ the isotropy element of $\alpha$ at $\mathscr{e}$, so that $\alpha(\mathscr{e})=\mathscr{e}a$, Theorem \ref{universalproperty} then tells us that $\psi$ extends to a geometric map from $(\mathscr{F},\sigma^*\MC{\PGL_{m+1}\mathbb{R}})$, the sprawl of $(q_{{}_P}^{-1}(U),\MC{\PGL_{m+1}\mathbb{R}})$ generated by $a$ from $e$, to $(\mathscr{G},\omega)$ so that $\psi\circ\tilde{a}=\alpha\circ\psi$.

In this case, we can decompose $\mathscr{F}$ into a particularly useful form.

\begin{lemma}\label{naiveprojsprawl}Suppose that $a\in P_+$ is nontrivial and $U$ is a properly convex open neighborhood of $q_{{}_P}(e)$ in $\mathbb{RP}^m\cong\PGL_{m+1}\mathbb{R}/P$. The sprawl $(\mathscr{F},\sigma^*\MC{\PGL_{m+1}\mathbb{R}})$ of $(q_{{}_P}^{-1}(U),\MC{\PGL_{m+1}\mathbb{R}})$ generated by $a$ from the identity element $e\in\PGL_{m+1}\mathbb{R}$, then, is given by the (disjoint) union \[\mathscr{F}=\mathscr{F}_-\cup q_{{}_P}^{-1}\!\left(U\cap\mathrm{Fix}_{\mathbb{RP}^m}(a)\right)\cup\mathscr{F}_+\] of two disjoint $P$-invariant open subsets \[\mathscr{F}_+:=\left\{\tilde{a}^i(g)\in\mathscr{F}:g\not\in P\text{ and }\lim_{k\rightarrow+\infty}\tilde{a}^{k+i}(q_{{}_P}(g))=\tilde{a}^0(q_{{}_P}(e))\right\}\] and \[\mathscr{F}_-:=\left\{\tilde{a}^i(g)\in\mathscr{F}:g\not\in P\text{ and }\lim_{k\rightarrow-\infty}\tilde{a}^{k+i}(q_{{}_P}(g))=\tilde{a}^0(q_{{}_P}(e))\right\}\] with a copy of the preimage in $\PGL_{m+1}\mathbb{R}$ of the subset of $U$ fixed by $a$.\end{lemma}
\begin{proof}Because the subset $U\cap\mathrm{Fix}_{\mathbb{RP}^m}(a)$ is connected by convexity, it remains unchanged inside the sprawl as a subspace of fixed points for the automorphism $\tilde{a}$. Meanwhile, we can define two $P$-invariant open subsets $\sigma(\mathscr{F}_+)$ and $\sigma(\mathscr{F}_-)$ in $\PGL_{m+1}\mathbb{R}$ by \[\sigma(\mathscr{F}_+):=\left\{g\in\PGL_{m+1}\mathbb{R}:g\not\in P\text{ and }\lim_{k\rightarrow+\infty}a^k(q_{{}_P}(g))=q_{{}_P}(e)\right\}\] and \[\sigma(\mathscr{F}_-):=\left\{g\in\PGL_{m+1}\mathbb{R}:g\not\in P\text{ and }\lim_{k\rightarrow-\infty}a^k(q_{{}_P}(g))=q_{{}_P}(e)\right\}.\] These happen to be the same subset of $\PGL_{m+1}\mathbb{R}$---both are equal to the complement of $q_{{}_P}^{-1}(\mathrm{Fix}_{\mathbb{RP}^m}(a))$ inside of $\PGL_{m+1}\mathbb{R}$---though we will treat them separately because they correspond to disjoint open subsets inside the sprawl. Note that, essentially by definition, $a^k(g)\in q_{{}_P}^{-1}(U)$ for sufficiently large $k>0$ if $g\in\sigma(\mathscr{F}_+)$, and similarly for $\sigma(\mathscr{F}_-)$, so both are genuinely in the image of the sprawl map $\sigma:\mathscr{F}\to\PGL_{m+1}\mathbb{R}$.

Since $a$ is a projective transformation, it preserves lines, and since $a$ fixes the origin $q_{{}_P}(e)\in\mathbb{RP}^m$ to first-order, each line through the origin is invariant under $a$. In particular, for each $g\in\sigma(\mathscr{F}_+)$, there is a geodesic segment $\gamma_g:[0,1]\to\mathbb{RP}^m$ contained in $q_{{}_P}(\{e\}\cup\sigma(\mathscr{F}_+))$ connecting $\gamma_g(1)=q_{{}_P}(g)$ to $\gamma_g(0)=q_{{}_P}(e)$ that gets shrunk into $q_{{}_P}(e)$ under forward iterates of $a$. Therefore, for positive $i_1,i_2\in\mathbb{Z}$ such that $a^{i_1}(g),a^{i_2}(g)\in q_{{}_P}^{-1}(U)$, $\gamma_g$ is contained in both $a^{-i_1}(U)$ and $a^{-i_2}(U)$ by convexity and $\tilde{a}^k(q_{{}_P}(e))\sim\tilde{a}^0(q_{{}_P}(e))$ for all $k\in\mathbb{Z}$, so the concatenation $\bar{\gamma}_g\star\gamma_g$ is a thinly null-homotopic loop incremented from $-i_1$ to $-i_2$ between $a^{-i_1}(a^{i_1}(q_{{}_P}(g)))$ and $a^{-i_2}(a^{i_2}(q_{{}_P}(g)))$, hence $\tilde{a}^{-i_1}(a^{i_1}(g))$ is sprawl-equivalent to $\tilde{a}^{-i_2}(a^{i_2}(g))$. In other words, we get a copy $\mathscr{F}_+$ of $\sigma(\mathscr{F}_+)$ inside $\mathscr{F}$, with $g\in\sigma(\mathscr{F}_+)$ corresponding to the sprawl-equivalence class of elements of the form $\tilde{a}^{-k}(a^k(g))$ with $a^k(g)\in q_{{}_P}^{-1}(U)$ for all $k>0$, and by similar considerations, we get a similar copy $\mathscr{F}_-$ of $\sigma(\mathscr{F}_-)$ in $\mathscr{F}$.

The images of $\mathscr{F}_+$ and $\mathscr{F}_-$ in the double cover $(\SLin_{m+1}^\pm\mathbb{R},\MC{\SLin_{m+1}^\pm\mathbb{R}})$ of the Klein geometry are disjoint, so they must be disjoint in the sprawl. Hence, the result is proven.\mbox{\qedhere}\end{proof}

This decomposition of the sprawl is very useful because the geometric map $\psi$ must be injective when restricted to $\mathscr{F}_+$ and $\mathscr{F}_-$.

\begin{lemma}For the geometric map $\psi:(\mathscr{F},\sigma^*\MC{\PGL_{m+1}\mathbb{R}})\rightarrow(\mathscr{G},\omega)$ such that $\psi\circ\tilde{a}=\alpha\circ\psi$ defined above, $\psi|_{\mathscr{F}_+}$ and $\psi|_{\mathscr{F}_-}$ are injective.\end{lemma}
\begin{proof}Suppose $\psi(\tilde{a}^{i_1}(g_1))=\psi(\tilde{a}^{i_2}(g_2))$ for some $\tilde{a}^{i_1}(g_1),\tilde{a}^{i_2}(g_2)\in\mathscr{F}_+$. Then, by definition, $\tilde{a}^{i_1+k}(g_1)$ and $\tilde{a}^{i_2+k}(g_2)$ must be in $\tilde{a}^0(q_{{}_P}^{-1}(U))$ for all sufficiently large $k>0$, and $\psi$ is a geometric embedding on $\tilde{a}^0(q_{{}_P}^{-1}(U))\cong q_{{}_P}^{-1}(U)$, so $\tilde{a}^{i_1+k}(g_1)\sim\tilde{a}^{i_2+k}(g_2)$, hence $\tilde{a}^{i_1}(g_1)\sim\tilde{a}^{i_2}(g_2)$. Similarly, if $\psi(\tilde{a}^{i_1}(g_1))=\psi(\tilde{a}^{i_2}(g_2))$ for some $\tilde{a}^{i_1}(g_1),\tilde{a}^{i_2}(g_2)\in\mathscr{F}_-$, then $\tilde{a}^{i_1-k}(g_1)\sim\tilde{a}^{i_2-k}(g_2)$ for all sufficiently large $k>0$, so we again get $\tilde{a}^{i_1}(g_1)\sim\tilde{a}^{i_2}(g_2)$.\mbox{\qedhere}\end{proof}

Thus, by using Lemma 5.6 of \cite{NaganoOchiai} together with our Theorem \ref{universalproperty}, we get copies of large open subsets of the Klein geometry embedded into every Cartan geometry of type $(\PGL_{m+1}\mathbb{R},P)$ (satisfying certain mild curvature restrictions) admitting a nontrivial automorphism with a higher-order fixed point.

While this does determine a large chunk of each parabolic geometry of type $(\PGL_{m+1}\mathbb{R},P)$ with a nontrivial higher-order fixed point, we cannot guarantee that the geometry is globally flat without making further assumptions. In particular, we can construct non-flat examples as follows: take two copies of the sprawl $\mathscr{F}$ and $\mathscr{F}'$, glue them together by $\tilde{a}$-equivariantly identifying $\mathscr{F}'_+$ with $\mathscr{F}_-$ (in the same way that $\mathscr{F}_+$ is identified with $\mathscr{F}_-$ when mapped into the model geometry over $\mathbb{RP}^m$), remove the higher-order fixed point $q_{{}_P}(\mathscr{e}')$ from $\mathscr{F}'$, and then consider smooth $\tilde{a}$-invariant deformations of the Cartan connection that only vary over $\mathscr{F}'_-$. Since we have removed the higher-order fixed point $q_{{}_P}(\mathscr{e}')$ from the second copy $\mathscr{F}'$ of the sprawl, the curvature of the connection over $\mathscr{F}'_-$ is no longer forced to be globally flat, and we can get several such examples by considering projective structures on $\mathbb{R}^m\cong q_{{}_P}(\mathscr{F}'_-)$ that are invariant under a translation on $\mathbb{R}^m$, which we can identify with $\tilde{a}$ on $q_{{}_P}(\mathscr{F}'_-)$, and whose curvature vanishes outside of a proper invariant open subset. We have illustrated a schematic diagram for what this looks like in Figure \ref{projpatchpic}.

\begin{figure}
\centering\includegraphics[width=0.8\textwidth]{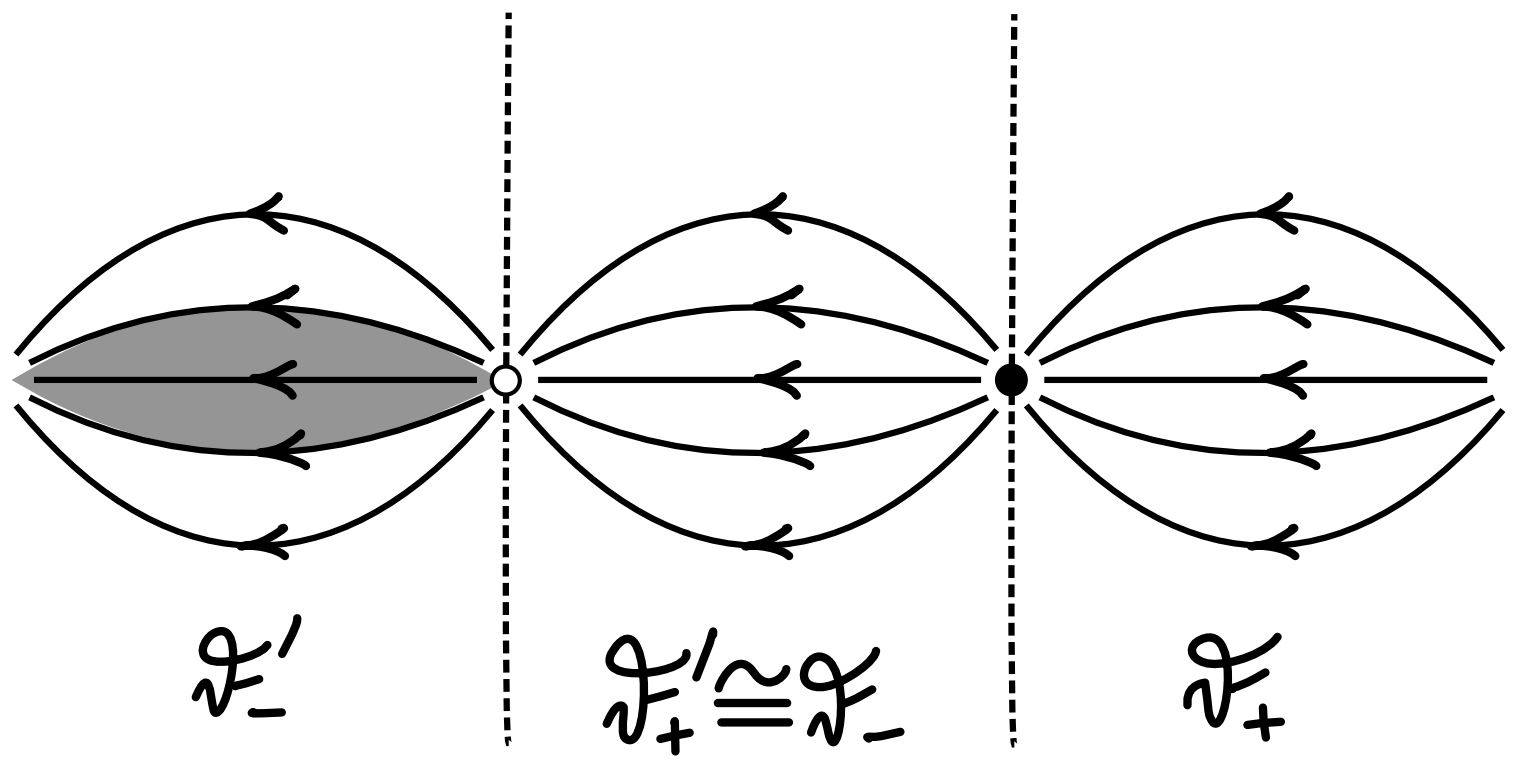}
\caption{A schematic diagram for a projective geometry admitting an automorphism with a higher-order fixed point, with a non-flat region highlighted in gray}
\label{projpatchpic}
\end{figure}

\subsection{Non-flat conformal Lorentzian structures with higher-order fixed points}
In this case, we consider the parabolic model geometry $(\PO(\mathrm{h}_{2,m}),P)$, where $\mathrm{h}_{2,m}$ is the symmetric bilinear form on $\mathbb{R}^{m+2}$ of signature $(2,m)$ with quadratic form given by \[\begin{bmatrix}x_0 \\ x \\ x_{m+1}\end{bmatrix}\mapsto 2x_0x_{m+1}+x^\top I_{1,m-1} x,\] with $I_{1,m-1}$ the diagonal $m\times m$ matrix with $+1$ on the first diagonal entry and $-1$ on all of the others, and $P$ is the parabolic subgroup \[P:=\left\{\begin{pmatrix}r & r\beta & -\tfrac{r}{2}\beta I_{1,m-1}\beta^\top \\ 0 & A & -AI_{1,m-1}\beta^\top \\ 0 & 0 & r^{-1}\end{pmatrix}\!:\,\begin{array}{l} r\in\mathbb{R}^\times,\, \beta^\top\in\mathbb{R}^{m}, \\ \text{and }A\in\Orth(1,m-1)\end{array}\right\}\] with nilradical \[P_+:=\left\{\left(\begin{smallmatrix}1 & \beta & -\tfrac{1}{2}\beta I_{1,m-1}\beta^\top \\ 0 & \mathds{1} & -I_{1,m-1}\beta^\top \\ 0 & 0 & 1\end{smallmatrix}\right):\beta^\top\in\mathbb{R}^{m}\right\}.\] Here, the homogeneous space $\PO(\mathrm{h}_{2,m})/P$ is $\PO(\mathrm{h}_{2,m})$-equivariantly diffeomorphic to the projectivized null-cone \[\mathrm{Null}(\mathrm{h}_{2,m}):=\left\{\mathbb{R}^\times u\in\mathbb{RP}^{m+1}:\mathrm{h}_{2,m}(u,u)=0\right\}\] of $\mathrm{h}_{2,m}$ in $\mathbb{RP}^{m+1}$, and for $m\geq 3$, Cartan geometries of type $(\PO(\mathrm{h}_{2,m}),P)$ with the mild curvature restrictions alluded to above are well-known to correspond to conformal Lorentzian structures on $m$-dimensional manifolds; see Section 4.1.2 of \cite{CapSlovakPG1}.

We would like to construct an example of a parabolic geometry of type $(\PO(\mathrm{h}_{2,m}),P)$ that is not globally flat but still admits a nontrivial automorphism with a higher-order fixed point. Fortunately, we already have two local examples for this, given in Section 6.2 of \cite{Frances2007}. Both of these local examples are constructed in a manner similar to that of the non-flat projective example above, but instead of needing to glue sprawls together, Frances manages to directly construct deformations of the conformal Lorentzian structure over Minkowski space that are locally invariant under the flows of a vector field with a higher-order fixed point at the origin.

Writing $G_-$ for the abelian horospherical subgroup \[G_-:=\left\{\begin{pmatrix}1 & 0 & 0 \\ x & \mathds{1} & 0 \\ -\tfrac{x^\top I_{1,m-1}x}{2} & -x^\top I_{1,m-1} & 1\end{pmatrix}:x\in\mathbb{R}^{m}\right\}\simeq\mathbb{R}^m,\] the flat conformal Lorentzian structure on Minkowski space is given by the restriction of the Maurer--Cartan form of $\PO(\mathrm{h}_{2,m})$ to the open subset $G_-P\subseteq G$ over $q_{{}_P}(G_-)\cong G_-\cong\mathbb{R}^m$. This gives an embedding of Minkowski space into the Klein geometry for $(\PO(\mathrm{h}_{2,m}),P)$, and the flows of the vector fields in Frances's local examples correspond to the restrictions $t\mapsto\Lt{\exp(tY)}|_{G_-P}$ of left-translations by one-parameter subgroups in $P_+$ to the open subset $G_-P$. Unfortunately, these flows are incomplete over Minkowski space, so they only determine local automorphisms, and the invariant deformations over Minkowski space do not extend to the full Klein geometry, where the flows would become complete.

To get the desired examples with \textit{global} automorphisms admitting higher-order fixed points, we will use the following lemma.

\begin{lemma}\label{locext} Let $(\mathscr{G},\omega)$ be a Cartan geometry of type $(G,H)$ over $M$ with automorphism $\alpha$. Suppose that there is an $H$-invariant open subset $\mathscr{G}'\subseteq\mathscr{G}$ and an open subset $U\subseteq q_{{}_H}(\mathscr{G}')$ such that $\alpha(\mathscr{e})\in q_{{}_H}^{-1}(U)$ for some $\mathscr{e}\in q_{{}_H}^{-1}(U)$ and $\alpha(U)\subseteq q_{{}_H}(\mathscr{G}')$. Furthermore, suppose that sprawl-equivalence for the sprawl $(\mathscr{F},\sigma^*\omega)$ of $(q_{{}_H}^{-1}(U),\omega)$ generated by $\alpha$ from $\mathscr{e}$ coincides with the na\"{i}ve gluing from Section \ref{sprawl1}. Then, if $\omega'$ is a Cartan connection of type $(G,H)$ on $\mathscr{G}'$ for which $\alpha$ is a local automorphism, then there exists a Cartan connection $\tilde{\omega}'$ on $\mathscr{F}$ such that $\tilde{\alpha}\in\Aut(\mathscr{F},\tilde{\omega}')$ and $\tilde{\alpha}^i:(q_{{}_H}^{-1}(U),\omega)\hookrightarrow(\mathscr{F},\tilde{\omega}')$ is a geometric embedding for each $i\in\mathbb{Z}$.\end{lemma}
\begin{proof}Define $\tilde{\omega}'$ on $\mathscr{F}$ by $\tilde{\omega}'_{\tilde{\alpha}^i(\mathscr{g})}:=\omega'_\mathscr{g}\circ(\tilde{\alpha}^i)^{-1}_*$ for each $\tilde{\alpha}^i(\mathscr{g})\in\mathscr{F}$. Since $\omega'$ is a Cartan connection on $\mathscr{G}'$ and each of the relabeling maps $\tilde{\alpha}^i:q_{{}_H}^{-1}(U)\to\tilde{\alpha}^i(q_{{}_H}^{-1}(U))$ is an $H$-equivariant diffeomorphism, $\tilde{\omega}'$ must also be a Cartan connection if it is well-defined, and it must also be $\tilde{\alpha}$-invariant because \[(\tilde{\alpha}^*\tilde{\omega}')_{\tilde{\alpha}^i(\mathscr{g})}=\tilde{\omega}'_{\tilde{\alpha}^{i-1}(\mathscr{g})}\circ\tilde{\alpha}_*=\omega'_\mathscr{g}\circ(\tilde{\alpha}^{i-1})^{-1}_*\circ\tilde{\alpha}_*=\omega'_\mathscr{g}\circ(\tilde{\alpha}^i)^{-1}_*=\tilde{\omega}'_{\tilde{\alpha}^i(\mathscr{g})}.\] Thus, all we need to do is prove that $\tilde{\omega}'$ is well-defined.

Because we are assuming that sprawl-equivalence coincides with the equivalence relation on $\bigsqcup_{i\in\mathbb{Z}}\tilde{\alpha}^i(q_{{}_H}^{-1}(U))$ induced by the na\"{i}ve gluing in this case, each identification between the different copies $\tilde{\alpha}^k(q_{{}_H}^{-1}(U))$ of $q_{{}_H}^{-1}(U)$ comes from iterating identifications between adjacent copies $\tilde{\alpha}^i(q_{{}_H}^{-1}(U))$ and $\tilde{\alpha}^{i+1}(q_{{}_H}^{-1}(U))$ of the form $\tilde{\alpha}^{i}(\mathscr{g})\sim\tilde{\alpha}^{i+1}(\alpha^{-1}(\mathscr{g}))$, for $q_{{}_H}(\mathscr{g})$ in the connected component of $U\cap\alpha(U)$ containing $\alpha(q_{{}_H}(\mathscr{e}))$. In other words, we just need to show that $\tilde{\omega}'_{\tilde{\alpha}^i(\mathscr{g})}=\tilde{\omega}'_{\tilde{\alpha}^{i+1}(\alpha^{-1}(\mathscr{g}))}$ whenever $q_{{}_H}(\mathscr{g})$ lies in the connected component of $U\cap\alpha(U)$ containing $\alpha(q_{{}_H}(\mathscr{e}))$, which follows from the fact that $\alpha^*\omega'=\omega'$ wherever $\alpha$ is well-defined on $\mathscr{G}'$.\mbox{\qedhere}\end{proof}

By Lemma \ref{locext}, we can extend the local examples of \cite{Frances2007} to global examples if we can show that the sprawl generated by left-translation by $a\in P_+$ in the Klein geometry admits a sprawl whose sprawl-equivalence reduces to the na\"{i}ve gluing. By Theorem \ref{universalproperty}, this amounts to showing that the quotient space of the na\"ive gluing is Hausdorff.

Let us outline how to do this in the case of the first example from \cite{Frances2007}, whose flows correspond to left-translation by the one-parameter subgroup \[\varphi:t\mapsto\exp\left(t\left(\begin{smallmatrix}0 & e_1^\top & 0 \\ 0 & 0 & -e_1 \\ 0 & 0 & 0\end{smallmatrix}\right)\right)=\left(\begin{smallmatrix}1 & te_1^\top & -t^2/2 \\ 0 & \mathds{1} & -te_1 \\ 0 & 0 & 1\end{smallmatrix}\right)\] restricted to the open subset corresponding to Minkowski space. Pick a small open neighborhood $U$ of the origin $q_{{}_P}(e)$ such that, for each $x\in U$ contained in the null-cone $C$ through $q_{{}_P}(e)$, $\varphi_t(x)\in U$ for either all $t\geq 0$ or all $t\leq 0$.
Then, since we have \[\varphi_t\left(\begin{smallmatrix}1 \\ x \\ 0\end{smallmatrix}\right)=\left(\begin{smallmatrix}1+tx_1 \\ x \\ 0\end{smallmatrix}\right)\text{ for each }x=\left[\begin{smallmatrix}x_1 \\ \vdots \\ x_m\end{smallmatrix}\right]\text{ such that }x^\top I_{1,m-1}x=0,\] we get a null-cone $\tilde{C}$ through $\widetilde{\varphi_1}^0(q_{{}_P}(e))$ in $\bigsqcup_{i\in\mathbb{Z}}\widetilde{\varphi_1}^i(U)/\sim_\text{naive}$, with two $\widetilde{\varphi_1}$-invariant halves \[\tilde{C}_+=\{\widetilde{\varphi_1}^i(x):x\in C\cap U\text{ and }\varphi_t(x)\in U\text{ for all }t\geq 0\}\] and \[\tilde{C}_-=\{\widetilde{\varphi_1}^i(x):x\in C\cap U\text{ and }\varphi_t(x)\in U\text{ for all }t\leq 0\}.\] Both $\tilde{C}_+\setminus\{\widetilde{\varphi_1}^0(q_{{}_P}(e))\}$ and $\tilde{C}_-\setminus\{\widetilde{\varphi_1}^0(q_{{}_P}(e))\}$ are naturally isomorphic to $C\setminus\{q_{{}_P}(e)\}$ in $\PO(\mathrm{h}_{2,m})/P$, by $\widetilde{\varphi_1}^i(x)\mapsto(\varphi_1)^i(x)=\varphi_i(x)$. The complement of $\tilde{C}$ inside $\bigsqcup_{i\in\mathbb{Z}}\widetilde{\varphi_1}^i(U)/\sim_\text{naive}$ is a disjoint union of three $\widetilde{\varphi_1}$-invariant open subsets \[\begin{array}{c}q_{{}_P}(\mathscr{F}_+^{>0}):=\{\widetilde{\varphi_1}^i(x):x^\top I_{1,m-1}x>0\text{ and }x_1>0\}, \\ q_{{}_P}(\mathscr{F}_+^{<0}):=\{\widetilde{\varphi_1}^i(x):x^\top I_{1,m-1}x>0\text{ and }x_1<0\}, \text{ and} \\ q_{{}_P}(\mathscr{F}_-):=\{\widetilde{\varphi_1}^i(x):x^\top I_{1,m-1}x<0\},\end{array}\] on each of which the sprawl-equivalence reduces to the na\"ive gluing, analogous to the situation in Lemma \ref{naiveprojsprawl}.

It remains, then, to show that distinct points $\widetilde{\varphi_1}^{i_1}(x_1)$ and $\widetilde{\varphi_1}^{i_2}(x_2)$ on $\tilde{C}$ are separable by neighborhoods. If $\widetilde{\varphi_1}^{i_1}(x_1)$ and $\widetilde{\varphi_1}^{i_2}(x_2)$ are both in $\tilde{C}_+$ or $\tilde{C}_-$, then flowing by $\varphi_t$ allows us to pull both points inside of $\widetilde{\varphi_1}^0(U)$, which is an embedded Hausdorff subspace, hence they must be separated by neighborhoods. If $\widetilde{\varphi_1}^{i_1}(x_1)\in\tilde{C}_+$ and $\widetilde{\varphi_1}^{i_2}(x_2)\in\tilde{C}_-$, then every neighborhood of $\widetilde{\varphi_1}^{i_1}(x_1)$ nontrivially intersects $q_{{}_P}(\mathscr{F}_+^{>0})$ and every neighborhood of $\widetilde{\varphi_1}^{i_2}(x_2)$ nontrivially intersects $q_{{}_P}(\mathscr{F}_+^{<0})$, so we must have $\varphi_1^{i_1}(x_1)=\varphi_1^{i_2}(x_2)=q_{{}_P}(e)$, and hence \[\widetilde{\varphi_1}^{i_1}(x_1)\sim_\text{naive}\widetilde{\varphi_1}^{i_2}(x_2)\sim_\text{naive}\widetilde{\varphi_1}^0(q_{{}_P}(e)),\] for them not to be separable by neighborhoods. Therefore, distinct points of $\tilde{C}$ are separable by neighborhoods in $\bigsqcup_{i\in\mathbb{Z}}\widetilde{\varphi_1}^i(U)/\sim_\text{naive}$, so it follows that $\bigsqcup_{i\in\mathbb{Z}}\widetilde{\varphi_1}^i(U)/\sim_\text{naive}$ is Hausdorff.

\subsection{Non-flat path geometries with higher-order fixed points}
Finally, consider the parabolic model geometry $(\PGL_{m+1}\mathbb{R},Q)$, where $Q$ is the parabolic subgroup \[Q:=\left\{\begin{pmatrix}r & \tau & q \\ 0 & s & p \\ 0 & 0 & A\end{pmatrix}:\begin{array}{c} r,s\in\mathbb{R}^\times,\tau\in\mathbb{R}, \\ p^\top,q^\top\in\mathbb{R}^{m-1}, \\ A\in\GLin_{m-1}\mathbb{R}\end{array}\right\}\] with nilradical \[Q_+:=\left\{\begin{pmatrix}1 & \tau & q \\ 0 & 1 & p \\ 0 & 0 & \mathds{1}\end{pmatrix}:\tau\in\mathbb{R}\text{ and } p^\top,q^\top\in\mathbb{R}^{m-1}\right\}.\] Note that $Q$ is a subgroup of the parabolic subgroup $P<\PGL_{m+1}\mathbb{R}$\linebreak defined above for projective geometries. The underlying homogeneous space $\PGL_{m+1}\mathbb{R}/Q$ corresponds to the projectivized tangent bundle $\mathbb{P}(T\mathbb{RP}^m):=\{\mathbb{R}^\times u: u\in T\mathbb{RP}^m\}$, and parabolic geometries of type $(\PGL_{m+1}\mathbb{R},Q)$ correspond to geometric structures called \emph{(generalized) path geometries}; see Section 4.4.3 of \cite{CapSlovakPG1}.

Similar to the conformal Lorentzian case above, we would like to construct a parabolic geometry of type $(\PGL_{m+1}\mathbb{R},Q)$ with a global automorphism admitting a higher-order fixed point, and we have a local example given by Example 4.3 of \cite{KruglikovThe2018}. In the language of \cite{Erickson2022-2}, if we denote by \[G_-:=\left\{\begin{pmatrix}1 & 0 & 0 \\ t & 1 & 0 \\ x & v & \mathds{1}\end{pmatrix}:t\in\mathbb{R}\text{ and } x,v\in\mathbb{R}^{m-1}\right\}\] a horospherical subgroup such that $G_-Q$ is open in $\PGL_{m+1}\mathbb{R}$ and by $\kgf$ the Killing form on $\mathfrak{pgl}_{m+1}\mathbb{R}$, the example from \cite{KruglikovThe2018} is a ``curvature tree'' grown from an element of the form $\ell\Omega\in\Lambda^2(\mathfrak{pgl}_{m+1}\mathbb{R}/\mathfrak{q})^\vee\otimes\mathfrak{pgl}_{m+1}\mathbb{R}$, for some $\ell\neq 0$ and \[\Omega=\tfrac{1}{4(m+1)^2}\left(\begin{smallmatrix}0 & 0 & 0 \\ 0 & 0 & e_1^\top \\ 0 & 0 & 0\end{smallmatrix}\right)_\kgf\wedge\left(\begin{smallmatrix}0 & 0 & e_1^\top \\ 0 & 0 & 0 \\ 0 & 0 & 0\end{smallmatrix}\right)_\kgf\otimes\left(\begin{smallmatrix}0 & 0 & 0 \\ 0 & 0 & 0 \\ 0 & 0 & e_{m-1}e_1^\top\end{smallmatrix}\right),\] whose Cartan connection $\omega_{\ell\Omega}$ we can view as a deformation of the\linebreak restriction of the Maurer--Cartan form in the Klein geometry to $G_-Q$. Denoting by $\Ad_q\cdot\Omega$ the natural action of $q\in Q$ on the element $\Omega$ in $\Lambda^2(\mathfrak{pgl}_{m+1}\mathbb{R}/\mathfrak{q})^\vee\otimes\mathfrak{pgl}_{m+1}\mathbb{R}$ induced by the adjoint representation, we have \begin{align*}a_\tau\cdot\Omega & =\tfrac{1}{4(m+1)^2}\Ad_{a_\tau}\left(\begin{smallmatrix}0 & 0 & 0 \\ 0 & 0 & e_1^\top \\ 0 & 0 & 0\end{smallmatrix}\right)_\kgf\wedge\Ad_{a_\tau}\left(\begin{smallmatrix}0 & 0 & e_1^\top \\ 0 & 0 & 0 \\ 0 & 0 & 0\end{smallmatrix}\right)_\kgf\otimes\Ad_{a_\tau}\left(\begin{smallmatrix}0 & 0 & 0 \\ 0 & 0 & 0 \\ 0 & 0 & e_{m-1}e_1^\top\end{smallmatrix}\right) \\ & =\tfrac{1}{4(m+1)^2}\left(\begin{smallmatrix}0 & 0 & \tau e_1^\top \\ 0 & 0 & e_1^\top \\ 0 & 0 & 0\end{smallmatrix}\right)_\kgf\wedge\left(\begin{smallmatrix}0 & 0 & e_1^\top \\ 0 & 0 & 0 \\ 0 & 0 & 0\end{smallmatrix}\right)_\kgf\otimes\left(\begin{smallmatrix}0 & 0 & 0 \\ 0 & 0 & 0 \\ 0 & 0 & e_{m-1}e_1^\top\end{smallmatrix}\right) \\ & =\Omega\end{align*} for \[a_\tau:=\left(\begin{smallmatrix}1 & \tau & 0 \\ 0 & 1 & 0 \\ 0 & 0 & \mathds{1}\end{smallmatrix}\right),\] so left-translations by such elements $a_\tau\in Q_+$ give local automorphisms of the curvature tree $(G_-Q,\omega_{\ell\Omega})$. Since $a_\tau(e)=ea_\tau=a_\tau\in G_-Q$, this local automorphism has a higher-order fixed point at $q_{{}_Q}(e)$.

To get an extended, global geometry from this, we again just need to apply Lemma \ref{locext}, by showing that there exists an open neighborhood $U$ of $q_{{}_Q}(e)$ such that sprawl-equivalence for the sprawl of $(q_{{}_Q}^{-1}(U),\MC{\PGL_{m+1}\mathbb{R}})$ generated by $a_\tau$ from $e$ coincides with the na\"ive gluing. For this purpose, note that the inclusion $Q<P$ determines a natural map \[q_P:\PGL_{m+1}\mathbb{R}/Q\to\PGL_{m+1}\mathbb{R}/P,\,q_{{}_Q}(g)\mapsto q_{{}_P}(g),\] corresponding to the natural projection from $\mathbb{P}(T\mathbb{RP}^m)$ to $\mathbb{RP}^m$, and choose our open neighborhood $U\subseteq G_-Q$ so that $q_P(U)$ is a properly convex open neighborhood of $q_P(q_{{}_Q}(e))=q_{{}_P}(e)\in\mathbb{RP}^m$. Then, we have \[q_{{}_Q}^{-1}(U)\subseteq q_{{}_P}^{-1}(q_P(U))\subseteq\PGL_{m+1}\mathbb{R},\] so viewing the sprawl $(\mathscr{F},\sigma^*\MC{\PGL_{m+1}\mathbb{R}})$ of $q_{{}_P}^{-1}(q_P(U))$ generated by $a_\tau$ from $e$ as a Cartan geometry of type $(\PGL_{m+1}\mathbb{R},Q)$ over $\mathscr{F}/Q$, we get a geometric embedding from $(\mathscr{F}',\sigma^*\MC{\PGL_{m+1}\mathbb{R}})$, the sprawl of $q_{{}_Q}^{-1}(U)$ generated by $a_\tau$ from $e$, into $(\mathscr{F},\sigma^*\MC{\PGL_{m+1}\mathbb{R}})$. In particular, since we have seen from Lemma \ref{naiveprojsprawl} that the sprawl equivalence for $\mathscr{F}$ coincides with the na\"ive gluing, it follows that the same is true of $\mathscr{F}'$, so applying Lemma \ref{locext} gives us a global extension of Example 4.3 from \cite{KruglikovThe2018}.

\section*{Appendix: the holonomy group of the sprawl}
Anticipating a growing interest in techniques utilizing the holonomy groups of Cartan geometries, we have decided to include the following supplementary result. Throughout the proof, we will unabashedly use ideas and terms from \cite{HolonomyPaper}, particularly for developments of points and their relations to automorphisms.

Suppose we are given a Cartan geometry $(\mathscr{G},\omega)$ of type $(G,H)$ over a smooth manifold $M$, together with a distinguished element $\mathscr{e}\in\mathscr{G}$, an automorphism $\alpha\in\Aut(\mathscr{G},\omega)$, and a connected open subset $U\subseteq M$ containing $q_{{}_H}(\mathscr{e})$ and $\alpha(q_{{}_H}(\mathscr{e}))$.

\begin{proposition}For $(\mathscr{F},\sigma^*\omega)$ the sprawl of $(q_{{}_H}^{-1}(U),\omega)$ generated by $\alpha\in\Aut(\mathscr{G},\omega)$ from $\mathscr{e}$, let $a\in G$ be a development of $\alpha(\mathscr{e})$ from $\mathscr{e}$ as elements of $(q_{{}_H}^{-1}(U),\omega)$. The holonomy group $\Hol_\mathscr{e}(\mathscr{F},\sigma^*\omega)$ of the sprawl is the smallest subgroup of $G$ containing $\Hol_\mathscr{e}(q_{{}_H}^{-1}(U),\omega)$ that is normalized by $a$.\end{proposition}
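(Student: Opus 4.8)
The plan is to prove the equality by two inclusions. Write $\Hol:=\Hol_\mathscr{e}(q_{{}_H}^{-1}(U),\omega)$ for the holonomy of the distinguished open set, $\widetilde{\Hol}:=\Hol_\mathscr{e}(\mathscr{F},\sigma^*\omega)$ for that of the sprawl, and let $N:=\langle\, a^i\Hol a^{-i}:i\in\mathbb{Z}\,\rangle$ be the subgroup generated by all $a$-conjugates of $\Hol$. Since $aNa^{-1}=N$ and $\Hol\subseteq N$ (take $i=0$), while any $a$-normalized subgroup containing $\Hol$ must contain every $a^i\Hol a^{-i}$, the group $N$ is precisely the smallest subgroup containing $\Hol$ that is normalized by $a$; so it suffices to show $\widetilde{\Hol}=N$. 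The computation underlying everything is that \emph{$\tilde\alpha$ multiplies developments from $\mathscr{e}$ on the left by $a$}: because $\sigma\circ\tilde\alpha=\alpha\circ\sigma$ makes $\tilde\alpha$ a geometric automorphism, any path $\tilde\gamma$ from $\mathscr{e}$ satisfies $(\tilde\alpha\circ\tilde\gamma)_G=\tilde\gamma_G$, and the identification $\tilde\alpha(\mathscr{e})\sim\alpha(\mathscr{e})$ shows $a$ is a development of $\tilde\alpha(\mathscr{e})$ from $\mathscr{e}$ in $\mathscr{F}$. Concatenating and using the standard rules $(P\star Q)_G(1)=P_G(1)Q_G(1)$ and $(\overline{P})_G(1)=P_G(1)^{-1}$ (Chapter 3, Section 7 of \cite{Sharpe}), an induction shows $a^i g$ is a development of $\tilde\alpha^i(\mathscr{g})$ from $\mathscr{e}$ whenever $g$ is a development of $\mathscr{g}$ from $\mathscr{e}$ in $q_{{}_H}^{-1}(U)$.

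For $N\subseteq\widetilde{\Hol}$: a loop at $\mathscr{e}$ inside $\tilde\alpha^0(q_{{}_H}^{-1}(U))$ is also a loop at $\mathscr{e}$ in $\mathscr{F}$ with the same development, so $\Hol\subseteq\widetilde{\Hol}$. Given $\eta\in\Hol$ realized by a loop $\lambda$ at $\mathscr{e}$, the image $\tilde\alpha^i(\lambda)$ is a loop at $\tilde\alpha^i(\mathscr{e})$ with development $\eta$; prepending a path from $\mathscr{e}$ to $\tilde\alpha^i(\mathscr{e})$ of development $a^i$ and appending its reverse produces a loop at $\mathscr{e}$ of development $a^i\eta a^{-i}$. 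Hence each generator $a^i\Hol a^{-i}$ lies in the group $\widetilde{\Hol}$, giving $N\subseteq\widetilde{\Hol}$.

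The reverse inclusion $\widetilde{\Hol}\subseteq N$ is the substantial part. The idea is to build a well-defined ``developing map modulo $N$'', $D:\mathscr{F}\to N\backslash G$, by $D(\tilde\alpha^i(\mathscr{h})):=N\,a^i g_\mathscr{h}$, where $g_\mathscr{h}$ is any development of $\mathscr{h}$ from $\mathscr{e}$ in $q_{{}_H}^{-1}(U)$. This is well-defined on each copy because the $\Hol$-ambiguity of $g_\mathscr{h}$ is absorbed: $a^i\Hol\subseteq Na^i$ since $a^i\Hol a^{-i}\subseteq N$. The crux is a single-step consistency lemma: for an overlap of adjacent copies near $\tilde\alpha^{i+1}(\mathscr{e})$, i.e. $\tilde\alpha^i(\mathscr{g})\sim\tilde\alpha^{i+1}(\alpha^{-1}(\mathscr{g}))$, one has $N a^i g_\mathscr{g}=N a^{i+1} g_{\alpha^{-1}(\mathscr{g})}$. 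Using $a$-normality of $N$, this reduces to showing that the loop $L$ at $\mathscr{e}$ running $\mathscr{e}\to\alpha(\mathscr{e})$ in $q_{{}_H}^{-1}(U)$, across the overlap into $\alpha(q_{{}_H}^{-1}(U))$ to $\mathscr{g}$, and back through $q_{{}_H}^{-1}(U)$ has $\mathrm{dev}(L)\in N$. Choosing a path $\theta$ inside the overlap component $q_{{}_H}^{-1}(U)\cap\alpha(q_{{}_H}^{-1}(U))$ splits $L$ into a loop lying entirely in $q_{{}_H}^{-1}(U)$ and the $\alpha$-image of a loop in $q_{{}_H}^{-1}(U)$; a brief manipulation then yields $\mathrm{dev}(L)=a\eta_4a^{-1}\eta_1$ with $\eta_1,\eta_4\in\Hol$, which lies in $N$.

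Finally I would globalize this to consistency of $D$ across an arbitrary sprawl-equivalence and conclude. A sprawl-equivalence $\tilde\alpha^{i_1}(\mathscr{h}_1)\sim\tilde\alpha^{i_2}(\mathscr{h}_2)$ is witnessed by a thinly null-homotopic loop incremented from $i_1$ to $i_2$ with steps of size one; transporting the $N$-coset along its incrementation (by honest development within each segment, matched across each transition by the single-step lemma) shows $D$ agrees at the two ends. Here the decisive point is that the witnessing loop is thinly null-homotopic and so, by Lemma \ref{backtracking} together with the triviality of holonomy for thin loops, contributes no net development around the loop. This makes $D$ well-defined on all of $\mathscr{F}$, $H$-equivariant, and compatible with development in the sense that $D(\tilde\gamma(1))=N\,\tilde\gamma_G(1)$ for every path $\tilde\gamma$ from $\mathscr{e}$ (since $D(\mathscr{e})=N$). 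Evaluating on a loop $\tilde\gamma$ at $\mathscr{e}$ with $\tilde\gamma(1)=\mathscr{e}h$ gives $Nh=D(\mathscr{e}h)=D(\tilde\gamma(1))=N\tilde\gamma_G(1)$, so the holonomy element $\tilde\gamma_G(1)h^{-1}$ lies in $N$; hence $\widetilde{\Hol}\subseteq N$. I expect the main obstacle to be exactly this globalization step, namely a careful reference-point-independent formulation of ``transporting the coset'' along the incrementation and the thin-loop bookkeeping, rather than the single-step computation itself.
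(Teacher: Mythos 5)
Your proposal is correct, and it takes a genuinely different route from the paper's proof. The paper works loop by loop in the sprawl: given a path in $\mathscr{F}$ over a base loop at $\mathscr{e}$, it first inserts thinly null-homotopic detours (which cost nothing in development) to reduce to a loop with an incrementation from $0$ to $0$ whose transition points lie in the fibers over the translated base points $\tilde{\alpha}^{\max(k_j,k_{j+1})}(q_{{}_H}(\mathscr{e}))$; each segment then develops to an explicit factor of the form $b_j\eta_j a^{\pm 1}b_{j+1}^{-1}$ (or with $a^0$), so the total development collapses to an alternating word $\eta_0a^{k_1}\eta_1a^{\frac{1}{2}(k_2-k_0)}\cdots$ in which the powers of $a$ pair off, giving membership in your group $N$; realizability of arbitrary $\eta_j$ in such words then gives the reverse containment. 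You instead prove $N\subseteq\Hol_\mathscr{e}(\mathscr{F},\sigma^*\omega)$ directly by realizing the generators $a^i\,\Hol_\mathscr{e}(q_{{}_H}^{-1}(U),\omega)\,a^{-i}$, and you get the hard containment from the existence of a single global object, the developing map modulo $N$, $D:\mathscr{F}\to N\backslash G$, by checking it descends through the sprawl's gluing. Your single-step computation $\mathrm{dev}(L)=a\eta'a^{-1}\eta\in N$ is exactly right, and is the same rerouting through the distinguished overlap component that powers the paper's segment formula; the globalization you flag as the main obstacle does go through as you outline: lift the witnessing thin loop to $\mathscr{G}$, transport the coset by development within each labelled segment, match across each transition by the single-step lemma (the incrementation guarantees transitions happen only in the distinguished components, with steps of size one), and use Lemma \ref{backtracking} to see the lifted loop's total development is trivial, so the cosets at the two ends agree. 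The remaining compatibility $D(\tilde{\gamma}(1))=N\tilde{\gamma}_G(1)$ for arbitrary paths in $\mathscr{F}$ from $\mathscr{e}$ needs only a Lebesgue-number decomposition into single-copy segments; crucially, no matching argument is needed at the crossings between copies (which can occur at arbitrary sprawl-equivalent points, not just the nice ones), precisely because $D$ is by then already well defined on $\mathscr{F}$. What your route buys is conceptual cleanliness: the two inclusions are fully separated, and the holonomy bound for all loops follows at once from a descent argument rather than from normalizing each loop individually. What the paper's route buys is explicitness: it exhibits every holonomy element of the sprawl as an alternating word in $\Hol_\mathscr{e}(q_{{}_H}^{-1}(U),\omega)$ and $a^{\pm 1}$ with prescribed pairing of the powers of $a$, which delivers both inclusions from one formula.
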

\begin{proof}

Suppose $\gamma:[0,1]\to\mathscr{F}$ is a path lying over a loop $q_{{}_H}(\gamma)$ in $q_{{}_H}(\mathscr{F})$, with $\gamma(0)=\mathscr{e}$ and $\gamma(1)=\gamma(0)h_\gamma$. We want to compute $\gamma_G(1)h_\gamma^{-1}$. First, we will show that we can assume $\gamma$ lies over a loop incremented from $0$ to $0$, and then we will compute what $\gamma_G(1)h_\gamma^{-1}$ can be.

Let us break $\gamma$ into a concatenation of segments $\gamma=\gamma_0\star\cdots\star\gamma_{\ell-1}$ such that, for each $0\leq j<\ell$, $\gamma_j([0,1])\subseteq\tilde{\alpha}^{k_j}(q_{{}_H}^{-1}(U))$ for some $k_j\in\mathbb{Z}$. By definition of sprawl-equivalence, for \[\gamma_j(1)=\gamma_{j+1}(0)\in\tilde{\alpha}^{k_j}(q_{{}_H}^{-1}(U))\cap\tilde{\alpha}^{k_{j+1}}(q_{{}_H}^{-1}(U)),\] there must be a thinly null-homotopic loop $\gamma_{j,j+1}$ based at $\gamma_j(1)$ with $q_{{}_H}(\gamma_{j,j+1})$ incremented from $k_j$ to $k_{j+1}$. In particular, the modified path \[\gamma_0\star\gamma_{0,1}\star\gamma_1\star\gamma_{1,2}\star\cdots\star\gamma_{\ell-1}\star\gamma_{\ell-1,\ell}\star\gamma_\ell\] in $\mathscr{F}$ descends to a loop in $q_{{}_H}(\mathscr{F})$ with an incrementation from $0$ to $k_\ell$, and since the holonomy of a thinly null-homotopic loop is always trivial, \[(\gamma_0\star\gamma_{0,1}\star\gamma_1\star\gamma_{1,2}\star\cdots\star\gamma_{\ell-1}\star\gamma_{\ell-1,\ell}\star\gamma_\ell)_G(1)=\gamma_G(1).\] Thus, without loss of generality, we may assume that $\gamma$ lies over a loop $q_{{}_H}(\gamma)$ that is incremented from $0$ to $k_\ell$ for some $k_\ell\in\mathbb{Z}$. Moreover, since \[q_{{}_H}(\mathscr{e})=q_{{}_H}(\gamma(0))=q_{{}_H}(\gamma(1))\in\tilde{\alpha}^0(U)\cap\tilde{\alpha}^{k_\ell}(U),\] there must be a thinly null-homotopic loop $\gamma'$ based $\mathscr{e}$ such that $q_{{}_H}(\gamma')$ is incremented from $0$ to $k_\ell$. Concatenating $\gamma$ with $\Rt{h_\gamma}(\overline{\gamma'})$, we again get a path $\gamma\star\Rt{h_\gamma}(\overline{\gamma'})$ lying over a loop in $q_{{}_H}(\mathscr{F})$, this time with an incrementation from $0$ back to $0$, such that $(\gamma\star\Rt{h_\gamma}(\overline{\gamma'}))_G(1)=\gamma_G(1)$. Without loss of generality, we may therefore assume that $\gamma$ lies over a loop $q_{{}_H}(\gamma)$ with an incrementation from $0$ to $0$.

Let this incrementation of the loop $q_{{}_H}(\gamma)$ from $0$ to $0$ be given by the partition $0=t_0<\cdots<t_\ell=1$ and the finite integer sequence $k_0=0,\dots,k_{\ell-1}=0\in\mathbb{Z}$. By definition, $\gamma(t_{j+1})$ lies over the connected component of $\tilde{\alpha}^{k_j}(U)\cap\tilde{\alpha}^{k_{j+1}}(U)$ containing $q_{{}_H}(\tilde{\alpha}^{\max(k_j,k_{j+1})}(\mathscr{e}))$, so there exist paths \[\beta_{j+1}:[0,1]\to\tilde{\alpha}^{k_j}(q_{{}_H}^{-1}(U))\cap\tilde{\alpha}^{k_{j+1}}(q_{{}_H}^{-1}(U))\] with $\beta_{j+1}(0)=\gamma(t_{j+1})$ and $\beta_{j+1}(1)b_{j+1}=\tilde{\alpha}^{\max(k_j,k_{j+1})}(\mathscr{e})$ for some $b_{j+1}\in H$. In particular, $\beta_{j+1}\star\overline{\beta_{j+1}}$ is a thinly null-homotopic loop in $\tilde{\alpha}^{k_j}(q_{{}_H}^{-1}(U))\cap\tilde{\alpha}^{k_{j+1}}(q_{{}_H}^{-1}(U))$, so we may again construct a modified path \[\gamma|_{[0,t_1]}\star\beta_1\star\overline{\beta_1}\star\cdots\star\gamma|_{[t_{\ell-2},t_{\ell-1}]}\star\beta_{\ell-1}\star\overline{\beta_{\ell-1}}\star\gamma|_{[t_{\ell-1},1]}\] with the same total development as $\gamma$; this tells us that we may further assume, without loss of generality, that $\gamma(t_{j+1})b_{j+1}=\alpha^{\max(k_j,k_{j+1})}(\mathscr{e})$ for some $b_{j+1}\in H$ for each $0\leq j<\ell-1$.

With this, each segment $\gamma|_{[t_j,t_{j+1}]}$ with $0\leq j<\ell-1$ is a path from $\gamma(t_j)=\tilde{\alpha}^{\max(k_{j-1},k_j)}(\mathscr{e})b_j^{-1}$ to $\gamma(t_{j+1})=\tilde{\alpha}^{\max(k_j,k_{j+1})}(\mathscr{e})b_{j+1}^{-1}$, so since the space of possible developments from $\tilde{\alpha}^{\max(k_{j-1},k_j)}(\mathscr{e})$ to $\tilde{\alpha}^{\max(k_j,k_{j+1})}(\mathscr{e})$ is just \[\Hol_\mathscr{e}(q_{{}_H}^{-1}(U),\omega)a^{\max(k_j,k_{j+1})-\max(k_{j-1},k_j)}=\Hol_\mathscr{e}(q_{{}_H}^{-1}(U),\omega)a^{\frac{1}{2}(k_{j+1}-k_{j-1})},\] we must have \[(\gamma|_{[t_j,t_{j+1}]})_G(t_{j+1})=b_j\eta_j a^{\frac{1}{2}(k_{j+1}-k_{j-1})}b_{j+1}^{-1}\] for some $\eta_j\in\Hol_\mathscr{e}(q_{{}_H}^{-1}(U),\omega)$. Crucially, note that for another path $\zeta:[0,1]\to q_{{}_H}^{-1}(U)$ with $\zeta(0)=\mathscr{e}$ and $\zeta(1)=\zeta(0)h_\zeta=\mathscr{e}h_\zeta$, we can replace $\gamma|_{[t_j,t_{j+1}]}$ with $\tilde{\alpha}^{\max(k_{j-1},k_j)}(\Rt{b_j^{-1}}(\zeta))\star\Rt{b_j h_\zeta b_j^{-1}}(\gamma|_{[t_j,t_{j+1}]})$ to change the total development of $\gamma|_{[t_j,t_{j+1}]}$ from $b_j\eta_j a^{\frac{1}{2}(k_{j+1}-k_{j-1})}b_{j+1}^{-1}$ to \[(b_j\zeta_G(1)b_j^{-1})(b_jh_\zeta^{-1}b_j^{-1})(b_j\eta_j a^{\frac{1}{2}(k_{j+1}-k_{j-1})}b_{j+1}^{-1})(b_jh_\zeta b_j^{-1}),\] which is just $b_j(\zeta_G(1)h_\zeta^{-1})\eta_ja^{\frac{1}{2}(k_{j+1}-k_{j-1})}(b_j h_\zeta b_j^{-1}b_{j+1})^{-1}$, so replacing $b_{j+1}$ with $b_j h_\zeta b_j^{-1}b_{j+1}$, every $\eta_j\in\Hol_\mathscr{e}(q_{{}_H}^{-1}(U),\omega)$ can be realized in the total development $b_j\eta_j a^{\frac{1}{2}(k_{j+1}-k_{j-1})}b_{j+1}^{-1}$ of the segment $\gamma|_{[t_j,t_{j+1}]}$ for some $\gamma$ with the given incrementation from $0$ to $0$.

Similarly, for the final segment $\gamma|_{[t_{\ell-1},1]}$ of $\gamma$, we get a path from $\gamma(t_{\ell-1})=\tilde{\alpha}^{\max(k_{\ell-2},k_{\ell-1})}(\mathscr{e})b_{\ell-1}^{-1}$ to $\gamma(1)=\gamma(0)h_\gamma=\mathscr{e}h_\gamma$, so \[(\gamma|_{[t_{\ell-1},1]})_G(1)=b_{\ell-1}\eta_{\ell-1}a^{-\max(k_{\ell-2},k_{\ell-1})}h_\gamma\] for some $\eta_{\ell-1}\in\Hol_\mathscr{e}(q_{{}_H}^{-1}(U),\omega)$. Again, by modifying the segment and $h_\gamma$, we can realize any $\eta_{\ell-1}\in\Hol_\mathscr{e}(q_{{}_H}^{-1}(U),\omega)$ in this total development of the segment.

Putting all of this together, \begin{align*}\gamma_G(1) & =(\gamma|_{[0,t_1]})_G(t_1)\cdots(\gamma|_{[t_{\ell-1},1]})_G(1) \\ & =(\eta_0 a^{k_1}b_1^{-1})(b_1\eta_1 a^{\frac{1}{2}(k_2-k_0)}b_2^{-1})\cdots(b_{\ell-1}\eta_{\ell-1} a^{-\max(k_{\ell-2},k_{\ell-1})}h_\gamma) \\ & =\eta_0 a^{k_1}\eta_1 a^{\frac{1}{2}(k_2-k_0)}\cdots \eta_{\ell-1} a^{-\max(k_{\ell-2},k_{\ell-1})}h_\gamma,\end{align*} so \[\gamma_G(1)h_\gamma^{-1}=\eta_0 a^{k_1}\eta_1 a^{\frac{1}{2}(k_2-k_0)}\cdots \eta_{\ell-1} a^{-\max(k_{\ell-2},k_{\ell-1})}.\] Note, though, that because the labels $k_j$ come from an incrementation, each of the powers of $a$ in this expression is either $a^{-1}$, $a^0=e$, or $a^1=a$, with the sum of the first $j$ powers of $a$ precisely equal to $k_j$. Moreover, since the incrementation is from $0$ to $0$, the elements $a$ and $a^{-1}$ must occur in pairs, so that $\gamma_G(1)h_\gamma^{-1}$ is in the smallest subgroup containing $\Hol_\mathscr{e}(q_{{}_H}^{-1}(U),\omega)$ closed under conjugation by powers of $a$. Thus, $\gamma_G(1)h_\gamma^{-1}$ is contained in the desired subgroup. Finally, because every $\eta_j\in\Hol_\mathscr{e}(q_{{}_H}^{-1}(U),\omega)$ can be realized in the above expression for some $\gamma$ with the given incrementation, we can get every element of the desired subgroup by considering paths $\gamma$ with different incrementations from $0$ to $0$, hence $\Hol_\mathscr{e}(\mathscr{F},\sigma^*\omega)$ is equal to this subgroup.\mbox{\qedhere}\end{proof}

\bibliographystyle{plain}
\bibliography{sprawl-refs}

\end{document}